\numberwithin{equation}{section}
\newtheorem{theorem}[equation]{Theorem}
\newtheorem*{theorem*}{Theorem}
\newtheorem{lemma}[equation]{Lemma}
\newtheorem{proposition}[equation]{Proposition}
\newtheorem{corollary}[equation]{Corollary}
\newtheorem{definition}[equation]{Definition}
\theoremstyle{remark}
\newtheorem{notation}[equation]{Notation}
\theoremstyle{remark}
\newtheorem{remark}[equation]{Remark}
\newcommand{\ca}{{\mathcal A}}
\newcommand{\cm}{{\mathcal M}}
\newcommand{\cb}{{\mathcal B}}
\newcommand{\co}{{\mathcal O}}
\newcommand{\cc}{{\mathcal C}}
\newcommand{\ch}{{\mathcal H}}
\newcommand{\cd}{{\mathcal D}}
\newcommand{\cp}{{\mathcal P}}
\newcommand{\cv}{{\mathcal V}}
\newcommand{\cw}{{\mathcal W}}
\newcommand{\cz}{{\mathcal Z}}
\newcommand{\Ho}{\mathsf{Ho}}
\newcommand{\internalcomment}[1]{}
\begin{document}

\title[Homotopy theory of Spectral categories]{Homotopy theory of Spectral categories}
\author{Gon{\c c}alo Tabuada}
\address{Departamento de Matem{\'a}tica e CMA, FCT-UNL, Quinta da Torre, 2829-516 Caparica,~Portugal}

\keywords{Symmetric spectra, Spectral category, Quillen model structure, Bousfield's localization $Q$-functor, Non-additive filtration}

\email{
\begin{minipage}[t]{2cm}tabuada@fct.unl.pt
\end{minipage}
}

\begin{abstract}
We construct a Quillen model structure on the category of spectral categories, where the weak equivalences are the symmetric spectra analogue of the notion of equivalence of categories.
\end{abstract}

\maketitle

\tableofcontents
\section{Introduction}
In the past fifteen years, the discovery of highly structured categories of spectra ($S$-modules~\cite{EKMM}, symmetric spectra~\cite{HSS}, simplicial functors~\cite{Lydakis}, orthogonal spectra ~\cite{May}, $\ldots$) has opened the way for an importation of more and more algebraic techniques into stable homotopy theory~\cite{Lazarev} \cite{DwyerG} \cite{DwyerG1}. In this paper, we study a new ingredient in this `brave new algebra': {\em Spectral categories}.

\vspace{0.1cm}

Spectral categories are categories enriched over the symmetric monoidal category of symmetric spectra. As linear categories can be understood as {\em rings with several objects}, spectral categories can be understood as {\em symmetric ring spectra with several objects}.
They appear nowadays in several (related) subjects:

On one hand, they are considered as the `topological' analogue of differential graded (=DG) categories~\cite{Drinfeld} \cite{ICM} \cite{Tab}. The main idea is to replace the monoidal category $Ch(\mathbb{Z})$ of complexes of abelian groups by the monoidal category $\mathsf{Sp}^{\Sigma}$ of symmetric spectra, which one should imagine as `complexes of abelian groups up to homotopy'. In this way, spectral categories provide a non-additive framework for non-commutative algebraic geometry in the sense of Bondal, Drinfeld, Kapranov, Kontsevich, To{\"e}n, Van den Bergh $\ldots$ \cite{BK} \cite{BV} \cite{Drinfeld} \cite{Chitalk} \cite{ENS} \cite{finMotiv} \cite{Toen}. They can be seen as non-additive derived categories of quasi-coherent sheaves on a hypothetical non-commutative space. 

On the other hand they appear naturally in stable homotopy theory by the work of Dugger, Schwede-Shipley, $\ldots$ \cite{Dugger} \cite{SS2}. For example, it is shown in \cite[3.3.3]{SS2} that stable model categories with a set of compact generators can be characterized as modules over a spectral category. In this way several different subjects such as: equivariant homotopy theory, stable motivic theory of schemes, $\ldots$ and all the classical algebraic situations~\cite[3.4]{SS2} fit in the context of spectral categories.

It turns out that in all the above different situations, spectral categories should be considered only up to the notion of {\em stable quasi-equivalence} (\ref{stableeq}): a mixture between stable equivalences of symmetric spectra and categorical equivalences, which is the correct notion of equivalence between spectral categories.

\vspace{0.1cm}

In this article, we construct a Quillen model structure \cite{Quillen} on the category $\mathsf{Sp}^{\Sigma}\text{-}\mbox{Cat}$ of spectral categories, with respect to the class of stable quasi-equivalences.
Starting from simplicial categories~\cite{Bergner}, we construct in theorem~\ref{levelwise} a `levelwise' cofibrantly generated Quillen model structure on $\mathsf{Sp}^{\Sigma}\text{-}\mbox{Cat}$. 
Then we adapt Schwede-Shipley's non-additive filtration argument (Appendix~\ref{app:SS}) to our situation and prove our main theorem:
\begin{theorem*}[\ref{stable}]
The category $\mathsf{Sp}^{\Sigma}\text{-}\mbox{Cat}$ admits a right proper Quillen model structure whose weak equivalences are the stable quasi-equivalences and whose cofibrations are those of theorem~\ref{levelwise}.
\end{theorem*}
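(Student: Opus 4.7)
The plan is to localize the cofibrantly generated levelwise model structure of theorem~\ref{levelwise} at the stable quasi-equivalences, following the strategy Tabuada employed for DG-categories, which is itself modeled on Schwede--Shipley's treatment of symmetric ring spectra. The first step is to introduce a functor $Q: \mathsf{Sp}^{\Sigma}\text{-}\mbox{Cat}\to \mathsf{Sp}^{\Sigma}\text{-}\mbox{Cat}$ obtained by applying a functorial Bousfield $\Omega$-spectrum replacement to each hom-spectrum, together with a natural transformation $\eta:\mathrm{Id}\Rightarrow Q$. Because the replacement is performed entrywise, $Q(\mathcal{A})$ has the same objects as $\mathcal{A}$, the map $\eta_{\mathcal{A}}$ is always a stable quasi-equivalence, and a morphism $F$ is a stable quasi-equivalence precisely when $Q(F)$ is a levelwise quasi-equivalence.

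Next, declare $F:\mathcal{A}\to\mathcal{B}$ a stable fibration if it is a levelwise fibration and the naturality square relating $F$ to $Q(F)$ through $\eta$ is a homotopy pullback in the levelwise structure. This definition forces every trivial stable fibration to also be a trivial levelwise fibration, so lifting against the (unchanged) cofibrations is inherited from theorem~\ref{levelwise}. The second factorization --- trivial stable cofibration followed by stable fibration --- is produced by Quillen's small object argument applied to the union of the levelwise generating trivial cofibrations with an additional set of maps that forces $\eta$ to become a levelwise quasi-equivalence, i.e.\ a spectral-category version of adjoining new generating trivial cofibrations that enforce the $\Omega$-spectrum condition on hom-objects. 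The 2-of-3, retracts, and closure under limits and colimits axioms are then inherited from the levelwise structure together with the corresponding properties of stable equivalences in $\mathsf{Sp}^{\Sigma}$.

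The main obstacle, as in the DG-category analogue, is to show that pushouts of generating stable trivial cofibrations along arbitrary maps of spectral categories remain stable quasi-equivalences --- equivalently, that the class of stable trivial cofibrations is closed under transfinite compositions. This cannot be read off from $Q$, since $Q$ does not commute with colimits. The remedy, flagged explicitly in the introduction, is to adapt Schwede--Shipley's \emph{non-additive filtration} argument recorded in appendix~\ref{app:SS}: such a pushout carries an exhaustive filtration whose subquotients are wedges of smash products built from the attaching data, and one controls the stable homotopy groups of the resulting hom-spectra stage by stage via the associated cofibre sequences.

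Finally, right properness follows formally: thanks to the $Q$-characterization of stable fibrations, preservation of stable quasi-equivalences under pullback along stable fibrations reduces to right properness of the levelwise structure together with the standard fact that homotopy pullbacks of stable equivalences along stable fibrations of symmetric spectra are again stable equivalences.
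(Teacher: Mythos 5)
Your overall strategy is the one the paper follows (a Bousfield $Q$-functor, the observation that $F$ is a stable quasi-equivalence iff $Q(F)$ is a levelwise quasi-equivalence, the non-additive filtration of appendix~\ref{app:SS} to control pushouts, right properness at the end), but the step where you actually produce the model structure has a genuine gap. You propose to obtain the (trivial stable cofibration, stable fibration) factorization by the small object argument applied to the union of the levelwise generating trivial cofibrations $J$ with the maps $U(K)$ enforcing the $\Omega$-spectrum condition on hom-objects. The filtration argument does show that the left factor is a cofibration and a stable quasi-equivalence, but nothing guarantees that the right factor is a stable fibration in your homotopy-pullback sense: the class of maps with the R.L.P. with respect to $J\cup U(K)$ is strictly larger than the class of stable fibrations, because none of these maps controls the essential-surjectivity condition S2') of stable quasi-equivalences. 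Concretely, let $\cb$ be a spectral category with two objects $a,b$ which become isomorphic in $[\cb]$ but not in $\pi_0(\cb_0)$ (possible precisely because the hom-spectra need not be $\Omega$-spectra), and let $F:\ca\hookrightarrow\cb$ be the full inclusion on $\{a\}$. Hom-wise $F$ is an isomorphism, hence has the R.L.P. with respect to $J'$ and $U(K)$; and since any simplicial functor $\ch\to\cb_0$ with $x\mapsto a$, $y\mapsto b$ would force $a\cong b$ in $\pi_0(\cb_0)$, every relevant lifting problem against $A_{\ch}$ lands in the full subcategory on $a$ and admits a lift. So $F\in (J\cup U(K))\text{-}\mbox{inj}$, yet $F$ is a stable quasi-equivalence which is not a levelwise quasi-equivalence, so it cannot be a stable fibration (as you note yourself, trivial stable fibrations must be trivial levelwise fibrations). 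Thus your candidate generating set does not generate the stable trivial cofibrations; one would need extra generating maps encoding stable homotopy equivalences, which you do not construct.

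The paper avoids this problem by not attempting a cofibrantly generated description of the stable structure at all: $Q$-fibrations are defined by the R.L.P. with respect to all cofibrations which are $Q$-weak equivalences, and the factorizations come from the Bousfield--Friedlander-type localization theorem~\ref{modif} (a variant of \cite[X-4.1]{Jardine}), whose proof uses only right properness of the levelwise structure and conditions (A1)--(A3); these are then verified using proposition~\ref{fibres}, lemma~\ref{coincide}, \cite[4.2.6]{HSS}, and the fact that $U$ sends projective cofibrations to cofibrations and stable equivalences to stable quasi-equivalences. A smaller but related point: $Q$ cannot be defined by replacing each hom-spectrum separately (this would not yield a spectral category, since fibrant replacement is not monoidal); in the paper it is defined by the small object argument on $U(K)\cup J'$ in $\mathsf{Sp}^{\Sigma}\text{-}\mbox{Cat}$ itself, which is exactly why proposition~\ref{clef} is already needed to see that $\eta_{\ca}$ is a stable quasi-equivalence --- your ``because the replacement is performed entrywise'' justification is circular at that point, although you do invoke the filtration argument correctly for the pushout step.
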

Using theorem~\ref{stable} and the same general arguments of \cite{Toen}, we can describe the mapping space between two spectral categories $\ca$ and $\cb$ in terms of the nerve of a certain category of $\ca\text{-}\cb$-bimodules and prove that that the homotopy category $\Ho(\mathsf{Sp}^{\Sigma}\text{-}\mbox{Cat})$ possesses internal Hom's relative to the derived smash product of spectral categories.
 
\medbreak\noindent\textbf{Acknowledgments\,:} 
It is a great pleasure to thank B.~To{\"e}n for stating this problem to me and G.~Granja for several discussions and references. I am also very grateful to S.~Schwede for his interest and for kindly suggesting several improvements and references. I would like also to thank A.~Joyal and A.~E.~Stanculescu for pointing out an error in a previous version and the anonymous referee for numerous suggestions for the improvement of this paper.

\section{Preliminaries}
Throughout this article the adjunctions are displayed vertically with the left, resp. right, adjoint on the left side, resp. right side.
\begin{definition}
Let $(\cc, -\otimes-, \mathbb{I}_{\cc})$ and $(\cd, -\wedge-, \mathbb{I}_{\cd})$ be two symmetric monoidal categories. A {\em strong monoidal functor} is a functor $F:\cc \rightarrow \cd$ equipped with an isomorphism $\eta: \mathbb{I}_{\cd} \rightarrow F(\mathbb{I}_{\cc})$ and natural isomorphisms
$$ \psi_{X,Y}: F(X) \wedge F(Y) \rightarrow F(X \otimes Y),\,\,\,\, X,Y \in \cc$$
which are coherently associative and unital (see diagrams $6.27$ and $6.28$ in \cite{Borceaux}).
A {\em strong monoidal adjunction} between monoidal categories is an adjunction for which the left adjoint is strong monoidal. 
\end{definition}
Let $s\mathbf{Set}$, resp. $s\mathbf{Set}_{\bullet}$, be the (symmetric monoidal) category of simplicial sets, resp. pointed simplicial sets. 
By a {\it simplicial category}, resp. {\em pointed simplicial category}, we mean a category enriched over $s\mathbf{Set}$, resp. over $s\mathbf{Set}_{\bullet}$.
We denote by $s\mathbf{Set}\text{-}\mbox{Cat}$, resp. $s\mathbf{Set}_{\bullet}\text{-}\mbox{Cat}$, the category of small simplicial categories, resp. pointed simplicial categories. Observe that the usual adjunction~\cite{Jardine} (on the left)
$$
\xymatrix{
s\mathbf{Set}_{\bullet} \ar@<1ex>[d] && s\mathbf{Set}_{\bullet}\text{-}\mbox{Cat} \ar@<1ex>[d]\\
s\mathbf{Set} \ar@<1ex>[u]^{(-)_+} && s\mathbf{Set}\text{-}\mbox{Cat}\ar@<1ex>[u]^{(-)_+}
}
$$
is strong monoidal and so it induces the adjunction on the right.

Let $\mathsf{Sp}^{\Sigma}$ be the (symmetric monoidal) category of symmetric spectra of pointed simplicial sets~\cite{HSS} \cite{Schwede}. We denote by $\wedge$ its smash product and by $\mathbb{S}$ its unit, i.e. the sphere symmetric spectrum~\cite[I-3]{Schwede}. Recall that the projective level model structure on $\mathsf{Sp}^{\Sigma}$ \cite[III-1.9]{Schwede} and the projective stable model structure on $\mathsf{Sp}^{\Sigma}$ \cite[III-2.2]{Schwede} are monoidal with respect to the smash product.
\begin{lemma}\label{monoidax}
The projective level model structure on $\mathsf{Sp}^{\Sigma}$ satisfies the monoid axiom~\cite[3.3]{SS}.
\end{lemma}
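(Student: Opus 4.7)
The plan is to prove a strictly stronger statement: smashing any level trivial cofibration with an arbitrary symmetric spectrum yields a level trivial cofibration. Since the class of level trivial cofibrations is closed under transfinite composition, pushout, and retract (as the trivial cofibrations of a cofibrantly generated model structure), the monoid axiom of \cite[3.3]{SS} follows: every relative cell complex built from maps $j\wedge X$, with $j$ a generating trivial cofibration and $X\in\mathsf{Sp}^\Sigma$, is a level trivial cofibration, in particular a level equivalence.

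By the small object argument, it suffices to verify the claim on a set $J_{\mathrm{lvl}}$ of generating trivial cofibrations. These have the form $F_n(i)$, where $F_n:s\mathbf{Set}_\bullet\to\mathsf{Sp}^\Sigma$ is the left adjoint to $\mathrm{Ev}_n$ and $i:\Lambda^m_{k,+}\to\Delta^m_+$ is a generating trivial cofibration of pointed simplicial sets. First I would write down the standard level-wise formula for the smash product of a free spectrum with an arbitrary spectrum $X$, namely
$$\bigl(F_n(K)\wedge X\bigr)_p \;\cong\; \Sigma_p^+\wedge_{\Sigma_n\times\Sigma_{p-n}}\bigl(K\wedge X_{p-n}\bigr)$$
for $p\geq n$, and $*$ otherwise, where $\Sigma_n$ acts trivially on $K$ and $\Sigma_{p-n}$ acts on $X_{p-n}$ via the symmetric spectrum structure. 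This reduces $(F_n(i)\wedge X)_p$ to an induced version of the map $i\wedge 1_{X_{p-n}}$ of pointed simplicial sets.

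Next, I would invoke two standard facts about pointed simplicial sets. First, $s\mathbf{Set}_\bullet$ is a monoidal model category in which every object is cofibrant, so smashing the trivial cofibration $i$ with the arbitrary pointed simplicial set $X_{p-n}$ yields a trivial cofibration in $s\mathbf{Set}_\bullet$. Second, for a subgroup $H\leq G$ of a finite group $G$, the functor $G^+\wedge_H(-)$ sends an $H$-equivariant map to a $|G/H|$-fold wedge of copies of the underlying map; in particular it preserves trivial cofibrations. Applying this to $H=\Sigma_n\times\Sigma_{p-n}\leq G=\Sigma_p$ shows that $(F_n(i)\wedge X)_p$ is a trivial cofibration in $s\mathbf{Set}_\bullet$ for every $p$, which is exactly what is meant by $F_n(i)\wedge X$ being a level trivial cofibration.

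The only non-routine step is the bookkeeping for the formula in the second paragraph, which should be checked by passing through the Day-convolution description of $\wedge$ and tracking the $\Sigma_n$-action that $F_n$ introduces; everything after that is formal. I expect this symmetric-group bookkeeping, together with verifying that wedges of trivial cofibrations remain trivial cofibrations in $s\mathbf{Set}_\bullet$ (immediate, since cofibrations are monomorphisms and weak equivalences are preserved under coproducts), to be the only substantive content.
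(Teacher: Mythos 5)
Your proof is correct, and its overall shape agrees with the paper's: both arguments establish that smashing a trivial cofibration of the projective level model structure with an arbitrary symmetric spectrum gives a map which is levelwise a trivial cofibration of pointed simplicial sets (i.e.\ a trivial cofibration in the injective level model structure), and then conclude via the closure of that class under cobase change and transfinite composition, since such maps are in particular level equivalences. The difference is where the key smashing fact comes from: the paper simply cites Schwede's proposition [III-1.11], whereas you reduce (legitimately, since $-\wedge X$ is a left adjoint and the injective trivial cofibrations are closed under pushout, transfinite composition and retract) to the generating trivial cofibrations $F_m(\Lambda[k,n]_+\rightarrow\Delta[n]_+)$ and verify the fact by hand from the levelwise description of $F_n(K)\wedge X$ together with induction along an inclusion of symmetric groups; this makes the lemma self-contained at the cost of the equivariant bookkeeping you flag. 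One slip in that bookkeeping: the correct formula is $(F_n(K)\wedge X)_p\cong \Sigma_p^+\wedge_{\Sigma_{p-n}}(K\wedge X_{p-n})$ for $p\geq n$, equivalently $\Sigma_p^+\wedge_{\Sigma_n\times\Sigma_{p-n}}(\Sigma_n^+\wedge K\wedge X_{p-n})$, not a quotient by a trivially acting $\Sigma_n$; already for $K=S^0$, $X=\mathbb{S}$ and $p=n=2$ your formula gives $S^0$ while $F_2(S^0)_2=\Sigma_2^+$. This does not damage the argument: with the corrected formula the level-$p$ map is obtained by applying $\Sigma_p^+\wedge_{\Sigma_{p-n}}(-)$ to the $\Sigma_{p-n}$-equivariant trivial cofibration $i\wedge 1_{X_{p-n}}$, hence is a finite wedge of copies of it and therefore a trivial cofibration of pointed simplicial sets, exactly as you argue.
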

\begin{proof}
Let $Z$ be a symmetric spectrum and $f:X \rightarrow Y$ a trivial cofibration in the projective level model structure. By proposition~\cite[III-1.11]{Schwede} the morphism
$$ Z\wedge f: Z \wedge X \rightarrow Z \wedge Y$$
is a trivial cofibration in the injective level model structure~\cite[III-1.9]{Schwede}. Since trivial cofibrations are stable under co-base change and transfinite composition, we conclude that each map in the class 
$$ (\{ \mbox{projective\, trivial\, cofibration} \} \wedge \mathsf{Sp}^{\Sigma})-\mbox{cof}_{reg}$$
is in particular a level equivalence. This proves the lemma. 
\end{proof}
\begin{definition}
A {\em spectral category} $\ca$ is a $\mathsf{Sp}^{\Sigma}$-category~\cite[6.2.1]{Borceaux}.
\end{definition}
Recall that this means that $\ca$ consists in the following data:
\begin{itemize}
\item[-] a class of objects $\mbox{obj}(\ca)$ (usually denoted by $\ca$ itself);
\item[-] for each ordered pair of objects $(x,y)$ of $\ca$, a symmetric spectrum $\ca(x,y)$;
\item[-] for each ordered triple of objects $(x,y,z)$ of $\ca$, a composition morphism in $\mathsf{Sp}^{\Sigma}$
$$\ca(x,y)\wedge \ca(y,z) \rightarrow \ca(x,z)\,,$$
satisfying the usual associativity condition;
\item[-] for any object $x$ of $\ca$, a morphism $\mathbb{S} \rightarrow \ca(x,x)$ in $\mathsf{Sp}^{\Sigma}$, satisfying the usual unit condition with respect to the above composition.
\end{itemize}
If $\mbox{obj}(\ca)$ is a set we say that $\ca$ is a {\em small} spectral category.
\begin{definition}
A {\em spectral functor} $F:\ca \rightarrow \cb$ is a $\mathsf{Sp}^{\Sigma}$-functor~\cite[6.2.3]{Borceaux}.
\end{definition}
Recall that this means that $F$ consists in the following data:
\begin{itemize}
\item[-] a map $\mbox{obj}(\ca) \rightarrow \mbox{obj}(\cb)$ and
\item[-] for each ordered pair of objects $(x,y)$ of $\ca$, a morphism in $\mathsf{Sp}^{\Sigma}$
$$ F(x,y):\ca(x,y) \longrightarrow \cb(Fx,Fy)$$
satisfying the usual unit and associativity conditions.
\end{itemize}
\begin{notation}
We denote by $\mathsf{Sp}^{\Sigma}\text{-}\mbox{Cat}$ the category of small spectral categories.
\end{notation}
Observe that the classical adjunction~\cite[I-2.12]{Schwede} (on the left)
$$
\xymatrix{
\mathsf{Sp}^{\Sigma} \ar@<1ex>[d]^{(-)_0} && \mathsf{Sp}^{\Sigma}\text{-}\mbox{Cat} \ar@<1ex>[d]^{(-)_0} \\
s\mathbf{Set}_{\bullet} \ar@<1ex>[u]^{\Sigma^{\infty}} && s\mathbf{Set}_{\bullet}\text{-}\mbox{Cat} \ar@<1ex>[u]^{\Sigma^{\infty}}
}
$$
is strong monoidal and so it induces the adjunction on the right.
\section{Simplicial categories}
In this chapter we give a detailed proof of a technical lemma concerning simplicial categories, which is due to A.~E.~Stanculescu. 
\begin{remark}
Notice that we have a fully faithful functor:
$$ 
\begin{array}{rcl}
s\mathbf{Set}\text{-}\mbox{Cat} & \longrightarrow & \mbox{Cat}^{\Delta^{op}}\\
\ca & \mapsto & \ca_{\ast}
\end{array}
$$
given by $\mbox{obj}(\ca_n)=\mbox{obj}(\ca)\,,\, n \geq 0$ and $\ca_n(x,x')=\ca(x,x')_n$.
\end{remark}
Recall from \cite[1.1]{Bergner}, that the category $s\mathbf{Set}\text{-}\mbox{Cat}$ carries a cofibrantly generated Quillen model structure whose weak equivalences are the Dwyer-Kan (=DK) equivalences, i.e. the simplicial functors $F: \ca \rightarrow \cb$ such that:
\begin{itemize}
\item[-] for all objects $x,y \in \ca$, the map
$$ F(x,y):\ca(x,y) \longrightarrow \cb(Fx,Fy)$$
is a weak equivalence of simplicial sets and
\item[-] the induced functor
$$ \pi_0(F): \pi_0(\ca) \longrightarrow \pi_0(\cb)$$
is an equivalence of categories.
\end{itemize} 
\begin{notation}\label{notkey}
Let $\ca$ be an (enriched) category and $x \in \mbox{obj}(\ca)$. We denote by $x^{\ast}\ca$ the full (enriched) subcategory of $\ca$ whose set of objects is $\{x\}$.
\end{notation}
\begin{lemma}{(Stanculescu~\cite[4.7]{Stanculescu})}\label{key2}
Let $\ca$ be a cofibrant simplicial category. Then for every $x \in \mbox{obj}(\ca)$, the simplicial category $x^{\ast}\ca$ is also cofibrant (as a simplicial monoid).
\end{lemma}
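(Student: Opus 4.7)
The plan is to exploit the explicit cellular description of cofibrant objects in Bergner's model structure on $s\mathbf{Set}\text{-}\mbox{Cat}$ and to track how the full subcategory on the single object $\{x\}$ is modified under each type of cell attachment. Recall that Bergner's generating cofibrations are the maps $U(\partial\Delta^n)\to U(\Delta^n)$ for $n\geq 0$ (where $U(K)$ denotes the simplicial category with objects $0,1$ and unique non-identity hom-space $U(K)(0,1)=K$), together with the object-adjunction cell $\emptyset\to\ast$. Since $\ca$ is cofibrant it is a retract of a simplicial category $\widetilde{\ca}$ obtained from $\emptyset$ by a transfinite composition of pushouts along these generators, and retracts of cofibrant simplicial monoids are cofibrant, so it suffices to prove the claim for $\widetilde{\ca}$.

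The argument would then proceed by transfinite induction on the cellular filtration $\widetilde{\ca}_0\to\widetilde{\ca}_1\to\cdots$ of $\widetilde{\ca}$, the goal at each successor stage being to show that the inclusion $x^*\widetilde{\ca}_\lambda\hookrightarrow x^*\widetilde{\ca}_{\lambda+1}$ is a cofibration of simplicial monoids; the limit case is automatic because $x^*(-)$ commutes with the filtered colimits that compute transfinite compositions. The base case and all $(C2)$-steps are immediate: adjoining an object distinct from $x$ leaves $x^*$ unchanged, while the very first appearance of $x$ itself produces the trivial (hence cofibrant) simplicial monoid.

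The substantive case is a pushout attaching a $(C1)$-cell
$$
\xymatrix{
U(\partial\Delta^n) \ar[r] \ar[d] & \widetilde{\ca}_\lambda \ar[d] \\
U(\Delta^n) \ar[r] & \widetilde{\ca}_{\lambda+1}
}
$$
via $0\mapsto a$, $1\mapsto b$. Here one computes $\widetilde{\ca}_{\lambda+1}(x,x)$ explicitly as the quotient of the disjoint union, over $k\geq 0$, of formal composites $x\to a\to b\to\cdots\to a\to b\to x$ in which the $k$ arrows $a\to b$ are labelled by simplices of the newly attached $\Delta^n$ and the remaining arrows by morphisms of $\widetilde{\ca}_\lambda$, modulo the boundary identifications imposed by $\partial\Delta^n\to\widetilde{\ca}_\lambda(a,b)$. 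Filtering this description by the number $k$ of new cells then realises $x^*\widetilde{\ca}_\lambda\hookrightarrow x^*\widetilde{\ca}_{\lambda+1}$ as a sequential composition of pushouts of cofibrations of simplicial monoids, built from the cofibration $\partial\Delta^n\to\Delta^n$ and the hom-simplicial-sets $\widetilde{\ca}_\lambda(x,a)$, $\widetilde{\ca}_\lambda(b,a)$, $\widetilde{\ca}_\lambda(b,x)$.

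The main obstacle will be verifying that each filtration step is genuinely a cofibration of simplicial monoids, which in turn forces the induction hypothesis to be strengthened to include cofibrancy of the relevant hom-simplicial-sets of $\widetilde{\ca}_\lambda$, and requires individual treatment of the degenerate configurations where one or both of $a,b$ coincide with $x$. The doubly degenerate case $a=b=x$ is the cleanest, since the attachment is then literally a pushout of the generating cofibration $F(\partial\Delta^n)\to F(\Delta^n)$ of simplicial monoids (where $F$ is the free simplicial monoid functor); the mixed cases require a careful bookkeeping of the opening and closing morphisms in the word description. The overall combinatorial shape of this step is a non-spectral, non-additive filtration argument, which foreshadows the Schwede--Shipley-style filtration that the body of the paper adapts to $\mathsf{Sp}^{\Sigma}\text{-}\mbox{Cat}$.
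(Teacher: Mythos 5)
There is a genuine gap at the heart of your substantive case, namely the claim that filtering $x^{\ast}\widetilde{\ca}_{\lambda+1}$ by the number $k$ of new cells exhibits $x^{\ast}\widetilde{\ca}_{\lambda}\hookrightarrow x^{\ast}\widetilde{\ca}_{\lambda+1}$ as a sequential composition of pushouts of cofibrations of simplicial monoids with generators built from $\widetilde{\ca}_{\lambda}(x,a)$, $\widetilde{\ca}_{\lambda}(b,a)$, $\widetilde{\ca}_{\lambda}(b,x)$ and $\partial\Delta[n]\to\Delta[n]$. First, the subobjects ``words with at most $k$ new cells'' are not submonoids of $\widetilde{\ca}_{\lambda+1}(x,x)$ (composition adds cell counts), so they cannot serve as the stages of a filtration in simplicial monoids. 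Second, and more seriously, the proposed generating words are not free over $x^{\ast}\widetilde{\ca}_{\lambda}$: the word indexed by $(g, c, g_0\circ m)$ with $m\in\widetilde{\ca}_{\lambda}(x,x)$ coincides in $\widetilde{\ca}_{\lambda+1}(x,x)$ with the product of the word $(g,c,g_0)$ and $m$, and a word whose intermediate factor $h\in\widetilde{\ca}_{\lambda}(b,a)$ is a composite $p\circ q$ through $x$ coincides with the product of two shorter words. These identifications are not imposed by a pushout along $F(\partial\Delta[n]\mbox{-part})\to F(\Delta[n]\mbox{-part})$ in simplicial monoids, so the squares you want simply are not pushouts, and cofibrancy of the step does not follow. (Your proposed strengthening of the induction hypothesis, cofibrancy of the hom-simplicial-sets, is vacuous in $s\mathbf{Set}$ and does not touch this problem; the analogy with the non-additive filtration of Appendix~\ref{app:SS} concerns hom-objects, not the monoid structure on endomorphisms.)

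To repair the argument you would have to take as generators only the \emph{reduced} words, i.e.\ those in which no factor passes through $x$; but ``passing through $x$'' is meaningless in a general simplicial category, since morphisms need not factor uniquely. This is precisely why the paper's proof does not run a cellular induction: it passes to the fixed-object-set model structure \cite[7.2]{DK}, invokes the Dwyer--Kan characterization \cite[7.6]{DK} of cofibrant objects as retracts of \emph{free} simplicial categories, and then uses the elementary Lemma~\ref{lemmaA}, which says that in a category free on a graph $G$ the endomorphisms of $x$ form a free monoid on the loops at $x$ built from $G$ that never pass through $x$ in an intermediate step --- unique factorization into generators being exactly what makes that notion, and the freeness of $x^{\ast}\ca$, well defined. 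Your reduction to cell complexes, the treatment of the $(C2)$-cells and of the limit ordinals are fine, but without a substitute for the freeness/unique-factorization input the key successor step is unproven.
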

\begin{proof}
Let $\co$ be the set of objects of $\ca$. Notice that if the simplicial category $\ca$ is cofibrant then it is also cofibrant in $s\mathbf{Set}^{\co}\text{-}\mbox{Cat}$~\cite[7.2]{DK}. Moreover a simplicial category with one object (for example $x^{\ast} \ca$) is cofibrant if and only if it is cofibrant as a simplicial monoid, i.e. cofibrant in $s\mathbf{Set}^{\{x\}}\text{-}\mbox{Cat}$.

Now, by \cite[7.6]{DK} the cofibrant objects in $s\mathbf{Set}^{\co}\text{-}\mbox{Cat}$ can be characterized as the retracts of the free simplicial categories. Recall from \cite[4.5]{DK} that a simplicial category $\cb$ (i.e. a simplicial object $\cb_{\ast}$ in $\mbox{Cat}$) is {\em free} if and only if:
\begin{itemize}
\item[(1)] for every $n \geq 0$, the category $\cb_n$ is free on a graph $G_n$ of generators and
\item[(2)] all degenerancies of generators are generators.
\end{itemize}
Therefore it is enough to show the following: if $\ca$ is a free simplicial category, then $x^{\ast}\ca$ is also free (as a simplicial monoid). Since for every $n \geq 0$, the category $\ca_n$ is free on a graph, lemma~\ref{lemmaA} implies that the simplicial category $x^{\ast}\ca$ satisfies condition (1). Moreover, since the degenerancies in $\ca_{\ast}$ induce the identity map on objects and send generators to generators, the simplicial category $x^{\ast}\ca$ satisfies also condition (2). This proves the lemma.
\end{proof}
\begin{lemma}\label{lemmaA}
Let $\cc$ be a category which is free on a graph $G$ of generators. Then for every object $x \in \mbox{obj}(\cc)$, the category $x^{\ast}\cc$ is also free on a graph $\widetilde{G}$ of generators.
\end{lemma}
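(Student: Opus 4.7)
The plan is to give an explicit description of the endomorphism monoid $x^{\ast}\cc$ as a set of certain paths in $G$ and then identify inside it a generating subgraph $\widetilde{G}$ on one vertex.

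First I would recall the concrete description of a free category. If $\cc$ is free on a graph $G$, then a non-identity morphism $y\to z$ in $\cc$ is the same thing as a path $(g_k, g_{k-1}, \ldots, g_1)$ in $G$, meaning a finite sequence of edges with $\mathrm{source}(g_1)=y$, $\mathrm{target}(g_k)=z$, and $\mathrm{source}(g_{i+1})=\mathrm{target}(g_i)$ for all $i$, and composition is concatenation of paths. In particular, $\cc(x,x)$ is the set of all paths in $G$ that start and end at $x$ (together with the identity), and composition in the monoid $x^{\ast}\cc$ is path concatenation.

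Now I would define the graph $\widetilde{G}$ of generators. Let $\widetilde{G}$ have a single vertex $\{x\}$, and take as edges the \emph{first-return loops at $x$}: those non-identity paths $(g_k, \ldots, g_1)$ in $G$ from $x$ to $x$ such that $\mathrm{target}(g_i)\neq x$ for $i=1,\ldots,k-1$. In other words, edges of $\widetilde{G}$ are the loops at $x$ in $G$ whose interior vertices all differ from $x$. By construction these are elements of $x^{\ast}\cc$, which gives a canonical morphism of graphs $\widetilde{G}\to x^{\ast}\cc$, hence a functor $F(\widetilde{G})\to x^{\ast}\cc$ from the free category on $\widetilde{G}$ (i.e. the free monoid on the set of edges of $\widetilde{G}$).

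The key step, and the only non-formal one, is to verify that this functor is an isomorphism, i.e. that every loop at $x$ factors uniquely as a concatenation of first-return loops. Given any non-identity path $p=(g_k,\ldots,g_1)$ from $x$ to $x$, let $0=i_0<i_1<\cdots<i_r=k$ be exactly those indices with $\mathrm{target}(g_{i_j})=x$ (and $\mathrm{source}(g_1)=x$). Then the subpaths $p_j=(g_{i_j},g_{i_j-1},\ldots,g_{i_{j-1}+1})$ are loops at $x$ that do not pass through $x$ internally, i.e.\ edges of $\widetilde{G}$, and $p=p_r\circ\cdots\circ p_1$. Uniqueness is immediate: in any factorisation of $p$ as a concatenation of first-return loops, the breakpoints must be exactly the indices at which the path is at $x$, which are intrinsic to $p$. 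This shows $x^{\ast}\cc$ is free on $\widetilde{G}$, proving the lemma. The main (and only) obstacle is this decomposition argument, which is straightforward once the generators are correctly identified as first-return loops rather than arbitrary loops.
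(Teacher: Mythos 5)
Your proof is correct and takes essentially the same approach as the paper: you choose the same generating graph $\widetilde{G}$ (loops at $x$ built from edges of $G$ that never pass through $x$ at an intermediate step) and establish existence and uniqueness of the factorisation by cutting a loop at exactly the indices where it returns to $x$. Your explicit remark that the breakpoints are intrinsic to the path is a slightly more detailed justification of the uniqueness the paper asserts, but the argument is the same.
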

\begin{proof}
We start by defining the generators of $\widetilde{G}$. An element of $\widetilde{G}$ is a path in $\cc$ from $x$ to $x$ such that:
\begin{itemize}
\item[(i)] every arrow in the path belongs to $G$ and
\item[(ii)] the path starts in $x$, finishes in $x$ and {\em never} passes throught $x$ in an intermediate step.
\end{itemize}
Let us now show that every morphism in $x^{\ast}\cc$ can be written uniquely as a finite composition of elements in $\widetilde{G}$. Let $f$ be a morphism in $x^{\ast}\cc$. Since $x^{\ast}\cc$ is a full subcategory of $\cc$ and $\cc$ is free on the graph $G$, the morphism $f$ can be written uniquely as a finite composition
$$ f= g_n \cdots g_i \cdots g_2 \,g_1\,,$$
where $g_i\,, \, 1 \leq i\leq n$ belongs to $G$.
Now consider the partition
$$ 1 \leq m_1 < \cdots < m_j < \cdots < m_k=n\,,$$
where $m_j$ is such that the target of the morphism $g_{m_j}$ is the object $x$. If we denote by $M_1= g_{m_1}\cdots g_1$ and by $M_j=g_{m_j} \cdots g_{m_{(j-1)}+1}\,,\, j \geq 2$ the morphisms in $\widetilde{G}$, we can factor $f$ as
$$ f= M_k \cdots M_j \cdots M_1\,.$$
Notice that our arguments shows us also that this factorization is unique and so the lemma is proven.
\end{proof}

\section{Levelwise quasi-equivalences}
In this chapter we construct a cofibrantly generated Quillen model structure on $\mathsf{Sp}^{\Sigma}\text{-}\mbox{Cat}$ whose weak equivalences are defined as follows.
\begin{definition}\label{leveleq}
A spectral functor $F:\ca \rightarrow \cb$ is a {\em levelwise quasi-equivalence} if:
\begin{itemize}
\item[L1)] for all objects $x, y \in \ca$, the morphism of symmetric spectra
$$ F(x,y):\ca(x,y) \rightarrow \cb(Fx,Fy)$$
is a level equivalence of symmetric spectra \cite[III-1.9]{Schwede} and
\item[L2)] the induced simplicial functor
$$ F_0:\ca_0 \rightarrow \cb_0$$
is a DK-equivalence in $s\mathbf{Set}\text{-}\mbox{Cat}$.
\end{itemize}
\end{definition}
\begin{notation}
We denote by $\cw_l$ the class of levelwise quasi-equivalences in $\mathsf{Sp}^{\Sigma}\text{-}\mbox{Cat}$.
\end{notation}
\begin{remark}
Notice that if condition $L1)$ is verified, condition $L2)$ is equivalent to:
\begin{itemize}
\item[L2')] the induced functor
$$ \pi_0(F_0): \pi_0(\ca_0) \longrightarrow \pi_0(\cb_0)$$
is essentially surjective.
\end{itemize}
\end{remark}
We now define our sets of (trivial) generating cofibrations in $\mathsf{Sp}^{\Sigma}\text{-}\mbox{Cat}$.
\begin{definition}\label{defc}
The set $I$ of {\em generating cofibrations} consists in:
\begin{itemize}
\item[-] the spectral functors obtained by applying the functor $U$ (\ref{funcU}) to the set of generating cofibrations of the projective level model structure on $\mathsf{Sp}^{\Sigma}$~\cite[III-1.9]{Schwede}. More precisely, we consider the spectral functors
$$ C_{m,n}:U(F_m\partial\Delta[n]_+) \longrightarrow U(F_m\Delta[n]_+),\,\,m,n\geq 0,$$
where $F_m$ denotes the level $m$ free symmetric spectra functor~\cite[I-2.12]{Schwede}.
\item[-] the spectral functor
$$C: \emptyset \longrightarrow \underline{\mathbb{S}}$$
from the empty spectral category $\emptyset$ (which is the initial object in $\mathsf{Sp}^{\Sigma}\text{-}\mbox{Cat}$) to the spectral category $\underline{\mathbb{S}}$ with one object $\ast$ and endomorphism ring spectrum $\mathbb{S}$.
\end{itemize}
\end{definition}

\begin{definition}\label{defa}
The set $J$ of {\em trivial generating cofibrations} consists in:
\begin{itemize}
\item[-] the spectral functors obtained by applying the functor $U$ (\ref{funcU}) to the set of trivial generating cofibrations of the projective level model structure on $\mathsf{Sp}^{\Sigma}$. More precisely, we consider the spectral functors
$$ A_{m,k,n}:U(F_m\Lambda[k,n]_+) \longrightarrow U(F_m\Delta[n]_+),\,\,m\geq 0,\, \,n\geq 1,\, \,0 \leq k \leq n\,.$$
\item[-] the spectral functors obtained by applying the composed functor $\Sigma^{\infty}(-_+)$ to the set $(A2)$ of trivial generating cofibrations in $s\mathbf{Set}\text{-}\mbox{Cat}$~\cite{Bergner}. More precisely, we consider the spectral functors
$$A_{\ch}: \underline{\mathbb{S}} \longrightarrow \Sigma^{\infty}(\ch_+),$$
where $A_{\ch}$ sends $\ast$ to the object $x$.
\end{itemize}
\end{definition}
\begin{notation}
We denote by $J'$, resp. $J''$, the subset of $J$ consisting of the spectral functors $A_{m,k,n}$, resp. $A_{\ch}$. In this way $J'\cup J'' =J$.
\end{notation}
\begin{remark}\label{nova}
By definition~\cite{Bergner} the simplicial categories $\ch$ have weakly contractible function complexes and are cofibrant in $s\mathbf{Set}\text{-}\mbox{Cat}$. By lemma~\ref{key2}, we conclude that $x^{\ast}\ch$ (i.e. the full simplicial subcategory of $\ch$ whose set of objects is $\{x\}$) is a cofibrant simplicial category.
\end{remark}

\begin{theorem}\label{levelwise}
If we let $\cm$ be the category $\mathsf{Sp}^{\Sigma}\text{-}\mbox{Cat}$, $W$ be the class $\cw_l$, $I$ be the set of spectral functors of definition~\ref{defc} and $J$ the set of spectral functors of definition~\ref{defa}, then the conditions of the recognition theorem \cite[2.1.19]{Hovey} are satisfied. Thus, the category $\mathsf{Sp}^{\Sigma}\text{-}\mbox{Cat}$ admits a cofibrantly generated Quillen model structure whose weak equivalences are the levelwise quasi-equivalences.
\end{theorem}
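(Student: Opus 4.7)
The plan is to verify the five hypotheses of the recognition theorem \cite[2.1.19]{Hovey} in sequence. Conditions (1)--(3) are essentially routine. The class $\cw_l$ inherits the two-out-of-three property and closure under retracts from the corresponding facts for level equivalences of symmetric spectra (giving $L1$) and DK-equivalences of simplicial categories (giving $L2$). Smallness of the domains in $I$ and $J$ reduces to smallness at the level of symmetric spectra: the domains are either $\emptyset$, $\underline{\mathbb{S}}$, or of the form $U(F_m K_+)$ for a finite pointed simplicial set $K$, and filtered colimits in $\mathsf{Sp}^{\Sigma}\text{-}\mbox{Cat}$ along maps with fixed or monotonically increasing object set are computed pointwise on hom-spectra.

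The core of the proof is condition (4): every relative $J$-cell complex lies in $\cw_l \cap I$-cof. Containment in $I$-cof is checked directly from the definitions by factoring each $J$-generator through $I$-cells. For membership in $\cw_l$, we proceed by transfinite induction and distinguish pushouts along $J'$ from pushouts along $J''$. A pushout along a $J'$-generator $A_{m,k,n}$ does not alter the object set, so $L2$ is automatically preserved; the change on hom-spectra is a transfinite composition of cobase changes of smash products with projective-level trivial cofibrations, and Lemma~\ref{monoidax} together with the non-additive filtration argument of Schwede--Shipley (Appendix~\ref{app:SS}) guarantees that these maps are level equivalences, establishing $L1$.

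Pushouts along a $J''$-generator $A_{\ch}:\underline{\mathbb{S}}\to\Sigma^{\infty}(\ch_+)$ are the delicate case: they enlarge the object set by attaching the objects of $\ch$ along $\ast$, and they simultaneously modify the hom-spectra. Here one uses Remark~\ref{nova}: since $\ch$ has weakly contractible function complexes and $x^{\ast}\ch$ is cofibrant as a simplicial monoid (Lemma~\ref{key2}), the strong monoidal adjunction $\Sigma^{\infty}(-_+)$ transports these properties into $\mathsf{Sp}^{\Sigma}\text{-}\mbox{Cat}$. Combined with a spectral bar-construction description of the pushout, this yields $L1$ for the newly formed hom-spectra and, via the level $0$ functor, a DK-equivalence after applying $(-)_0$, giving $L2$. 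I expect this step to be the main obstacle, since one must simultaneously control the interaction between the spectral enrichment and the enlargement of object sets, blending the monoid-axiom input with the simplicial cofibrancy provided by Remark~\ref{nova}.

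Finally, condition (5), namely $I\text{-inj} = \cw_l \cap J\text{-inj}$, follows from identifying $I$-inj concretely: lifting against $C_{m,n}$ forces each $F(x,y)$ to be a level trivial fibration of symmetric spectra, and lifting against $C$ forces $F$ to be surjective on objects. Such a spectral functor is trivially in $\cw_l$ and, using the same pointwise lifts together with object-surjectivity to handle $A_{\ch}$, is also in $J$-inj. The reverse inclusion is obtained from the standard fact that a fibration (i.e.\ a $J$-injective) which is also a weak equivalence lifts against all cofibrations, deduced from the smallness arguments and condition (4) via the small object argument.
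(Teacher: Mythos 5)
Your overall skeleton agrees with the paper (recognition theorem, the splitting $J=J'\cup J''$, the monoid axiom plus the filtration of Appendix~\ref{app:SS} for the $J'$-pushouts, and the characterization of $I\text{-}\mbox{inj}$ as the levelwise trivial fibrations which are surjective on objects), but the two places where the real work happens are not actually carried out. The most serious gap is the $J''$ case: you correctly flag remark~\ref{nova} and lemma~\ref{key2} as the relevant inputs and admit this is ``the main obstacle'', but the phrase ``a spectral bar-construction description of the pushout'' is left entirely undeveloped, and it is exactly here that the proof lives. The paper's argument factors $A_{\ch}$ as $\underline{\mathbb{S}}\to x^{\ast}\Sigma^{\infty}(\ch_+)\hookrightarrow \Sigma^{\infty}(\ch_+)$ and analyses the two pushouts separately: for the second one, \cite[5.2]{Latch} shows that pushing out along a \emph{full} spectral subcategory inclusion again yields a full inclusion, so the hom-spectra between old objects are untouched; for the first one, lemma~\ref{monoidax} and \cite[6.3]{SS3} provide a model structure on spectral categories with the fixed object set $\co$, in which $\underline{\mathbb{S}}\to x^{\ast}\Sigma^{\infty}(\ch_+)=\Sigma^{\infty}((x^{\ast}\ch)_+)$ is a trivial cofibration precisely because $x^{\ast}\ch$ is a cofibrant simplicial monoid with weakly contractible function complex, so its cobase change satisfies L1. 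Moreover L2 is not automatic from L1 ``after applying $(-)_0$'' as you assert: the new objects of $\ch$ must be shown to be isomorphic in $\pi_0(\cb_0)$ to objects coming from $\ca$, which the paper does by observing that a $0$-simplex $f\in\ch(x,y)_0$ becomes invertible in $\pi_0(\ch)$ and hence its image becomes invertible in $\pi_0(\cb_0)$. None of this is recoverable from your sketch.

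The second gap is your treatment of the inclusion $\cw_l\cap J\text{-}\mbox{inj}\subseteq I\text{-}\mbox{inj}$, which is one branch of the last condition of \cite[2.1.19]{Hovey} and must be verified independently; you dismiss it as ``the standard fact that a fibration which is also a weak equivalence lifts against all cofibrations, deduced from the smallness arguments and condition (4) via the small object argument''. That fact is an \emph{output} of the model structure being constructed, not an input: the usual retract argument would require knowing that maps in $\cw_l\cap I\text{-}\mbox{cof}$ lie in $J\text{-}\mbox{cof}$, i.e.\ the other branch of the same condition, so the reasoning is circular. The paper proves the inclusion directly: the R.L.P.\ against the $A_{m,k,n}$ together with L1 forces the hom-maps to be level trivial fibrations (condition Sj1), and the R.L.P.\ against the $A_{\ch}$ makes $F_0$ a trivial fibration in $s\mathbf{Set}\text{-}\mbox{Cat}$, which by \cite[3.2]{Bergner} is surjective on objects (condition Sj2); hence the functor lies in $\mathbf{Surj}=I\text{-}\mbox{inj}$. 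Your verification of conditions (1)--(3) and of $I\text{-}\mbox{inj}\subseteq \cw_l\cap J\text{-}\mbox{inj}$ is essentially the paper's, modulo citing \cite[3.2]{Bergner} for the lift against $A_{\ch}$.
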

\subsection*{Proof of Theorem \ref{levelwise}}
We start by observing that the category $\mathsf{Sp}^{\Sigma}\text{-}\mbox{Cat}$ is complete and cocomplete and that the class $\cw_l$ satisfies the two out of three axiom and is stable under retracts. Since the domains of the (trivial) generating cofibrations in $\mathsf{Sp}^{\Sigma}$ are sequentially small, the same holds by \cite{Kelly} for the domains of spectral functors in the sets $I$ and $J$. This implies that the first three conditions of the recognition theorem \cite[2.1.19]{Hovey} are verified.

We now prove that $J\text{-}\mbox{inj} \cap \cw_l = I\text{-}\mbox{inj}$. For this we introduce the following auxiliary class of spectral functors:
\begin{definition}
Let $\mathbf{Surj}$ be the class of spectral functors $F:\ca \rightarrow \cb$ such that:
\begin{itemize}
\item[Sj1)] for all objects $x, y \in \ca$, the morphism of symmetric spectra 
$$F(x,y): \ca(x,y) \rightarrow \cb(Fx,Fy)$$
 is a trivial fibration in the projective level model structure~\cite[III-1.9]{Schwede} and
\item[Sj2)] the spectral functor $F$ induces a surjective map on objects.
\end{itemize}
\end{definition}

\begin{lemma}\label{Iinj}
$I\text{-}\mbox{inj} = \mathbf{Surj}\,.$
\end{lemma}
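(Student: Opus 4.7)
The plan is to verify the two inclusions separately by unpacking each lifting problem and translating it into a condition on hom-spectra or on the object map.

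First I would observe that the functor $U$ is left adjoint to a kind of ``hom-spectrum at a pair of objects'' functor. Concretely, a spectral functor from $U(X)$ to a spectral category $\ca$ is the same datum as a choice of two objects $a_0,a_1 \in \mbox{obj}(\ca)$ together with a morphism $X \to \ca(a_0,a_1)$ in $\mathsf{Sp}^{\Sigma}$. Hence a commutative square
$$
\xymatrix{
U(F_m\partial\Delta[n]_+) \ar[r] \ar[d]_{C_{m,n}} & \ca \ar[d]^{F} \\
U(F_m\Delta[n]_+) \ar[r] & \cb
}
$$
corresponds to a choice of objects $b_0,b_1 \in \cb$ together with objects $a_0,a_1 \in \ca$ lifting them (note that $C_{m,n}$ is the identity on objects), a morphism $F_m\partial\Delta[n]_+ \to \ca(a_0,a_1)$, and a compatible morphism $F_m\Delta[n]_+ \to \cb(Fa_0,Fa_1)$. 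A lift in this square is exactly a lift in the corresponding square of symmetric spectra against $F(a_0,a_1)$. Therefore $F$ has the right lifting property against all $C_{m,n}$ if and only if each map $F(a_0,a_1)$ has the right lifting property against every generating cofibration of the projective level model structure on $\mathsf{Sp}^{\Sigma}$; that is precisely condition $Sj1)$.

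Next I would treat the generator $C:\emptyset \to \underline{\mathbb{S}}$. A spectral functor out of $\underline{\mathbb{S}}$ is the same thing as an object in the target (the unit $\mathbb{S} \to \ca(x,x)$ being prescribed). A square
$$
\xymatrix{
\emptyset \ar[r] \ar[d]_{C} & \ca \ar[d]^{F} \\
\underline{\mathbb{S}} \ar[r] & \cb
}
$$
amounts to a choice of object $b \in \mbox{obj}(\cb)$, and a lift is an object $a \in \mbox{obj}(\ca)$ with $Fa = b$. So $F$ has the right lifting property against $C$ if and only if $F$ is surjective on objects, which is condition $Sj2)$.

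Combining the two, $F \in I\text{-}\mbox{inj}$ if and only if $F$ satisfies both $Sj1)$ and $Sj2)$, i.e.\ $F \in \mathbf{Surj}$. I do not anticipate a serious obstacle here; the only mild subtlety is keeping straight that $C_{m,n}$ is bijective on objects (so the lift on objects is forced) while $C$ is responsible for the surjectivity on objects, and that the adjointness of $U$ converts hom-spectrum lifting problems into spectral-functor lifting problems on the nose.
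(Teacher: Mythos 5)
Your proof is correct and follows essentially the same route as the paper: the paper's proof simply asserts that lifting against the $C_{m,n}$ is equivalent (via the defining property of $U$) to condition $Sj1)$ and lifting against $C$ is equivalent to $Sj2)$, which is exactly what you verify, just with the adjunction-style unpacking made explicit.
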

\begin{proof}
Notice that a spectral functor satisfies condition $Sj1)$ if and only if it has the right lifting property (=R.L.P.) with respect to the spectral functors $C_{m,n}, \, m,n \geq 0$. Clearly a spectral functor has the R.L.P. with respect to the spectral functor $C$ if and only if it satisfies condition $Sj2)$.
\end{proof}

\begin{lemma}\label{total}
$\mathbf{Surj}=J\text{-}\mbox{inj}\cap \cw_l\,.$
\end{lemma}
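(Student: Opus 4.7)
The plan is to prove the two inclusions separately.

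For $\mathbf{Surj} \subseteq J\text{-}\mbox{inj} \cap \cw_l$, suppose $F \in \mathbf{Surj}$. Since trivial fibrations in the projective level model structure on $\mathsf{Sp}^{\Sigma}$ are in particular level equivalences, condition (Sj1) gives (L1); and (Sj2) forces $\pi_0(F_0)$ to be surjective on objects, hence essentially surjective, which by the remark after Definition~\ref{leveleq} gives (L2). For the RLP against $J'=\{A_{m,k,n}\}$, I would unwind the $U$-adjunction to reduce the problem to showing each $F(x,y)$ is a fibration in the projective level model structure, which holds since it is a trivial fibration. For the RLP against $J''=\{A_{\ch}\}$, I would use the composite adjunction $\Sigma^{\infty}(-_{+})\dashv(-)_{0}$ (combined with the pointed/unpointed adjunction) to translate any such lifting problem into a lifting problem in $s\mathbf{Set}\text{-}\mbox{Cat}$ for $F_0:\ca_0\to\cb_0$ against the Bergner trivial generating cofibration $\underline{\ast}\to\ch$. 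The key observation is then that $F_0$ is a trivial fibration in Bergner's model structure: each $F_0(x,y)$ is a trivial fibration of pointed (hence also of unpointed) simplicial sets, and $F_0$ is surjective on objects; so $F_0$ has the RLP against every Bergner cofibration, and in particular the desired lift exists.

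For the reverse inclusion $J\text{-}\mbox{inj}\cap\cw_l\subseteq\mathbf{Surj}$, suppose $F$ is in the intersection. The RLP against $J'$ gives (again by adjunction) that each $F(x,y)$ is a projective-level fibration; combined with (L1), each $F(x,y)$ is a trivial fibration, yielding (Sj1). For (Sj2), given $b\in\cb$, I would use (L2') to find $a\in\ca$ with $F(a)$ equivalent to $b$ in $\pi_0(\cb_0)$. By the design of Bergner's set $(A2)$, this equivalence can be represented by a simplicial functor $\ch\to\cb_0$ sending the distinguished object to $F(a)$ and a second object to $b$. Transferring this data across the adjunction $\Sigma^{\infty}(-_{+})\dashv(-)_{0}$ produces a lifting problem for $A_{\ch}$ whose top map sends $\ast\mapsto a$; the assumed RLP then delivers a spectral functor $\Sigma^{\infty}(\ch_{+})\to\ca$, i.e.\ a simplicial functor $\ch\to\ca_0$ lifting the given one, which provides $a'\in\ca$ with $F(a')=b$. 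This shows $F$ is surjective on objects.

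The step I expect to be the main obstacle is the RLP argument against $J''$ in the forward direction, because it requires upgrading the adjoint lifting problem to one that can be solved in Bergner's model structure: one must observe that applying $(-)_0$ to a trivial fibration in the projective level structure and forgetting basepoints produces a trivial fibration of simplicial sets, and combine this with surjectivity on objects to conclude that $F_0$ is a trivial fibration in Bergner's sense. In the reverse direction, the corresponding subtle point is the (essentially tautological) fact that Bergner's set $(A2)$ is rich enough so that every equivalence in $\pi_0(\cb_0)$ is realized by a simplicial functor from some $\ch\in(A2)$; granting this, the rest is a direct adjunction computation.
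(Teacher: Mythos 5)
Your forward inclusion is essentially the paper's argument: reduce the RLP against $J'$ to the hom-spectra by the $U$-adjunction, and for $J''$ pass across $\Sigma^{\infty}(-_{+})\dashv(-)_{0}$ and use Bergner's characterization (\cite[3.2]{Bergner}) that a simplicial functor which is surjective on objects and a hom-wise trivial fibration is a trivial fibration, hence lifts against the (trivial) cofibrations $\{x\}\to\ch$. That half is fine. In the reverse inclusion, your derivation of $Sj1)$ also matches the paper.

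The gap is in your argument for $Sj2)$. The step you call ``essentially tautological'' --- that any equivalence between $F(a)$ and $b$ in $\pi_0(\cb_0)$ is realized by a simplicial functor $\ch\to\cb_0$ with $\ch$ taken from Bergner's set $(A2)$, sending $x\mapsto F(a)$ and the other object to $b$ --- is not tautological and is not justified. The categories $\ch$ have \emph{all four} function complexes weakly contractible, so such a functor amounts to a fully homotopy-coherent equivalence datum (a choice of inverse, the homotopies $fe\simeq \mathrm{id}$, $ef\simeq\mathrm{id}$, and all higher coherences) inside the hom-spaces of $\cb_0$, which carry no fibrancy hypotheses here; on top of that you must arrange that the realizing category lies in the chosen set $(A2)$ (cofibrant, countably many simplices in each function complex), which needs its own bookkeeping. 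Promoting a $\pi_0$-isomorphism to such coherent data is precisely one of the delicate points in Bergner's own proofs, so you cannot simply assert it. Fortunately you do not need it: you have already established $Sj1)$, so $F_0$ is a hom-wise trivial Kan fibration and therefore has the RLP against Bergner's set $(A1)$; together with the assumed RLP against $(A2)$ this makes $F_0$ a fibration in $s\mathbf{Set}\text{-}\mbox{Cat}$, and conditions $L1)$ and $L2)$ make it a DK-equivalence, hence a trivial fibration. Surjectivity on objects then follows from \cite[3.2]{Bergner} (or simply by lifting against the cofibration $\emptyset\to\{x\}$), and since $F$ and $F_0$ agree on objects this gives $Sj2)$ --- which is exactly the route the paper takes.
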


\begin{proof}
We prove first the inclusion $\subseteq$. Let $F:\ca \rightarrow \cb$ be a spectral functor which belongs to $\mathbf{Surj}$. Conditions $Sj1)$ and $Sj2)$ clearly imply conditions $L1)$ and $L2)$ and so $F$ belongs to $\cw_l$. Notice also that a spectral functor which satisfies condition $Sj1)$ has the R.L.P. with respect to the trivial generating cofibrations $A_{m,k,n}$. It is then enough to show that $F$ has the R.L.P. with respect to the spectral functors $A_{\ch}$. By adjunction, this is equivalent to demand that the simplicial functor $F_0: \ca_0 \rightarrow \cb_0$ has the R.L.P. with respect to the set $(A2)$ of trivial generating cofibrations $\{x\} \rightarrow \ch$ in $s\mathbf{Set}\text{-}\mbox{Cat}$~\cite{Bergner}. Since $F$ satisfies conditions $Sj1)$ and $Sj2)$, proposition \cite[3.2]{Bergner} implies that $F_0$ is a trivial fibration in $s\mathbf{Set}\text{-}\mbox{Cat}$ and so the claim follows.

We now prove the inclusion $\supseteq$. Observe that a spectral functor  satisfies condition $Sj1)$ if and only if it satisfies condition $L1)$ and it has moreover the R.L.P. with respect to the trivial generating cofibrations $A_{m,k,n}$. Now, let $F:\ca \rightarrow \cb$ be a spectral functor which belongs to $J\text{-}\mbox{inj}\cap \cw_l$. It is then enough to show that it satisfies condition $Sj2)$.  Since $F$ has the R.L.P. with respect to the trivial generating cofibrations
$$ A_{\ch}: \underline{\mathbb{S}} \longrightarrow \Sigma^{\infty}(\ch_+)$$
the simplicial functor $F_0:\ca_0 \rightarrow \cb_0$ has the R.L.P. with respect to the inclusions $\{x\} \rightarrow \ch$.  This implies that $F_0$ is a trivial fibration in $s\mathbf{Set}\text{-}\mbox{Cat}$ and so by proposition \cite[3.2]{Bergner}, the simplicial functor $F_0$ induces a surjective map on objects. Since $F_0$ and $F$ induce the same map on the set of objects, the spectral functor $F$ satisfies condition $Sj2)$.
\end{proof}
We now characterize the class $J\text{-}\mbox{inj}$.
\begin{lemma}\label{fibrations}
A spectral functor $F:\ca \rightarrow \cb$ has the R.L.P. with respect to the set $J$ of trivial generating cofibrations if and only if it satisfies:
\begin{itemize}
\item[F1)] for all objects $x,y \in \ca$, the morphism of symmetric spectra
$$ F(x,y): \ca(x,y) \rightarrow \cb(Fx,Fy)$$
is a fibration in the projective level model structure \cite[III-1.9]{Schwede} and
\item[F2)] the induced simplicial functor 
$$ F_0: \ca_0 \rightarrow \cb_0$$
is a fibration in the Quillen model structure on $s\mathbf{Set}\text{-}\mbox{Cat}$.
\end{itemize}
\end{lemma}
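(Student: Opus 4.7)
The strategy is to translate each of the two lifting problems posed by $J=J'\cup J''$ across an appropriate adjunction, reducing the question to lifting problems on the level of symmetric spectra and of simplicial categories respectively. Concretely, the functor $U$ of \ref{funcU} is built precisely so that a spectral functor $F:\ca\rightarrow\cb$ has the R.L.P. with respect to $U(i):U(X)\to U(Y)$ if and only if, for every ordered pair $(x,y)$ of objects of $\ca$, the morphism of symmetric spectra $F(x,y):\ca(x,y)\to\cb(Fx,Fy)$ has the R.L.P. with respect to $i$; and the composed adjunction
$$ \Sigma^{\infty}((-)_+)\dashv (-)_0\ko $$
induced on enriched categories transports a lifting problem against $A_{\ch}:\underline{\mathbb{S}}\to\Sigma^{\infty}(\ch_+)$ to the lifting problem for $F_0:\ca_0\to\cb_0$ against the corresponding inclusion $\{x\}\to\ch$ of the set $(A2)$ in $s\mathbf{Set}\text{-}\mbox{Cat}$.

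For the forward direction, suppose $F$ is in $J\text{-}\mathrm{inj}$. Applying the adjunction $U\dashv(\cdot)$ to the generators $A_{m,k,n}=U(F_m\Lambda[k,n]_+\to F_m\Delta[n]_+)$ in $J'$ shows that each $F(x,y)$ has the R.L.P. with respect to the set of trivial generating cofibrations of the projective level model structure on $\mathsf{Sp}^{\Sigma}$, i.e. condition F1) holds. For F2), I would use Bergner's characterization that fibrations in $s\mathbf{Set}\text{-}\mbox{Cat}$ are exactly the simplicial functors having the R.L.P. with respect to the trivial generating cofibrations $(A1)\cup(A2)$. Since projective level fibrations of symmetric spectra are, by definition, levelwise Kan fibrations, F1) already implies that every $F_0(x,y)=F(x,y)_0$ is a Kan fibration, which handles $(A1)$; and the adjunction applied to $J''$ yields the R.L.P. of $F_0$ against $(A2)$. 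Hence $F_0$ is a fibration in $s\mathbf{Set}\text{-}\mbox{Cat}$, giving F2).

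The converse direction is then the easier one and simply runs the same adjunction arguments backwards: F1) translates, via $U$, into the R.L.P. of $F$ against every $A_{m,k,n}\in J'$, and F2), by restricting a fibration in $s\mathbf{Set}\text{-}\mbox{Cat}$ to have in particular the R.L.P. against $(A2)$, translates across $\Sigma^{\infty}((-)_+)\dashv(-)_0$ into the R.L.P. against every $A_{\ch}\in J''$.

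The only delicate point, and the one I would take the most care to verify, is the precise statement of the adjunction property for $U$ and the compatibility of the composed adjunction $\Sigma^{\infty}((-)_+)\dashv(-)_0$ at the level of enriched categories; once these are clear, together with the fact that the level $0$ of a projective level fibration is a Kan fibration, the rest amounts to an unpacking of Bergner's description of fibrations in $s\mathbf{Set}\text{-}\mbox{Cat}$ as R.L.P. with respect to $(A1)\cup(A2)$.
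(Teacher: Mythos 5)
Your proof is correct and follows essentially the same route as the paper: translate $J'$ through the adjunction for $U$ into condition F1), and translate $J''$ through $\Sigma^{\infty}((-)_+)\dashv(-)_0$ into the lifting of $F_0$ against the inclusions $\{x\}\rightarrow\ch$. Your extra remark that F1) supplies the levelwise Kan fibration condition needed for the $(A1)$ part of Bergner's generating trivial cofibrations is exactly the point the paper leaves implicit in its final sentence, so this is the same argument, just spelled out slightly more carefully.
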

\begin{proof}
Observe that a spectral functor $F$ satisfies condition $F1)$ if and only if it has the R.L.P. with respect to the trivial generating cofibrations $A_{m,k,n}$. By adjunction $F$ has the R.L.P. with respect to the spectral functors $A_{\ch}$ if and only if the simplicial functor $F_0$ has the R.L.P. with respect to the inclusions $\{x\} \rightarrow \ch$. In conclusion $F$ has the R.L.P. with respect to the set $J$ if and only if it satisfies conditions $F1)$ and $F2)$ altogether.
\end{proof}
\begin{lemma}\label{cell}
$J'\text{-}\mbox{cell} \subseteq \cw_l\,.$
\end{lemma}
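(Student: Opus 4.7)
The strategy is to reduce the claim to the case of a single pushout along a generator $A_{m,k,n} \in J'$, then invoke the Schwede--Shipley non-additive filtration of Appendix~\ref{app:SS} to control the morphism spectra. Since the levelwise quasi-equivalences are defined via level equivalences of symmetric spectra (condition $L1$) and DK-equivalences on $(-)_0$ (condition $L2$), and both level equivalences and DK-equivalences are stable under transfinite composition, it will suffice to show that every single pushout of a morphism in $J'$ belongs to $\cw_l$.

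First I would consider a pushout square
$$
\xymatrix{
U(F_m\Lambda[k,n]_+) \ar[r] \ar[d]_{A_{m,k,n}} & \ca \ar[d] \\
U(F_m\Delta[n]_+) \ar[r] & \cb
}
$$
and note that the functor $U$ lands in spectral categories with a fixed two-element object set, and that $A_{m,k,n}$ is the identity on objects. Hence the induced spectral functor $\ca \to \cb$ is the identity on $\mbox{obj}(\ca)$. This immediately handles condition $L2$: assuming $L1$, the functor $\ca_0 \to \cb_0$ is the identity on objects and a weak equivalence on each pointed simplicial Hom, hence a DK-equivalence. In particular, this simultaneously gives $L2')$ because $\pi_0(F_0)$ is the identity on objects.

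The substance is then to verify $L1$, i.e.\ that for each pair $x,y \in \mbox{obj}(\ca)$, the induced map $\ca(x,y) \to \cb(x,y)$ is a level equivalence of symmetric spectra. This is where the non-additive filtration of Appendix~\ref{app:SS} enters. Adapting Schwede--Shipley, one filters $\cb(x,y)$ as a transfinite composition $\ca(x,y)=P_0(x,y) \to P_1(x,y) \to \cdots$ with $\cb(x,y)=\mathrm{colim}_n P_n(x,y)$, whose successive layers are obtained as pushouts of maps of the form $j \wedge Z \to j' \wedge Z$, where $j\colon F_m\Lambda[k,n]_+ \hookrightarrow F_m\Delta[n]_+$ is the underlying trivial cofibration in $\mathsf{Sp}^{\Sigma}$ and $Z$ is a smash product of morphism spectra of $\ca$. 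By Lemma~\ref{monoidax}, each such layer lies in the class $(\{\text{projective trivial cofibration}\}\wedge \mathsf{Sp}^{\Sigma})\text{-}\mbox{cof}_{reg}$ and is therefore a level equivalence; level equivalences are closed under cobase change and transfinite composition, so the colimit $\ca(x,y) \to \cb(x,y)$ is a level equivalence.

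Combining this with the fact that every element of $J'\text{-}\mbox{cell}$ is built as a transfinite composition of such pushouts, and using once more that level equivalences and DK-equivalences are preserved by transfinite compositions, we conclude $J'\text{-}\mbox{cell}\subseteq \cw_l$. The main technical obstacle is the construction and analysis of the non-additive filtration in the spectral-category setting: one has to describe explicitly how the newly adjoined morphism interacts with existing composition chains in $\ca$ and check that each filtration quotient has the claimed form of a smash product against $j$. This is exactly the content deferred to Appendix~\ref{app:SS}, mirroring Schwede--Shipley's argument for rings with several objects; once it is in place, the monoid axiom closes the argument.
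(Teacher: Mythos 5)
Your proposal follows the paper's proof essentially verbatim: reduce to a single pushout along $A_{m,k,n}$ via closure of $\cw_l$ under transfinite composition, deduce condition $L1)$ from the non-additive filtration of Appendix~\ref{app:SS} (proposition~\ref{clef}) combined with the monoid axiom (lemma~\ref{monoidax}), and get $L2')$ for free since the pushout is the identity on objects. The only slight imprecision is your claim that level equivalences are closed under cobase change and transfinite composition --- they are not in general; what actually closes the argument is that the relevant maps lie in the class $(\{\mbox{projective trivial cofibration}\}\wedge \mathsf{Sp}^{\Sigma})\text{-}\mbox{cof}_{reg}$, whose members are level equivalences by lemma~\ref{monoidax}, which is the route you in effect already invoke.
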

\begin{proof}
Since the class $\cw_l$ is stable under transfinite compositions~\cite[10.2.2]{Hirschhorn} it is enough to prove the following: let $m \geq 0, \, n\geq 1, \, 0 \leq k \leq n$ and \newline $R:U(F_m\Lambda[k,n]_+) \rightarrow \ca$ a spectral functor. Consider the following pushout:
$$ 
\xymatrix{
U(F_m\Lambda[k,n]_+) \ar[d]_{A_{m,k,n}} \ar[r]^-R \ar@{}[dr]|{\lrcorner} & \ca \ar[d]^P \\
U(F_m\Delta[n]_+) \ar[r] & \cb\,.
}
$$
We need to show that $P$ belongs to $\cw_l$. Since the symmetric spectra morphisms
$$ F_m\Lambda[k,n]_+ \longrightarrow F_m\Delta[n]_+, \,\, m\geq 0, \, n\geq 1, \, 0\leq k \leq n$$
are trivial cofibrations in the projective level model structure, lemma~\ref{monoidax} and proposition~\ref{clef} imply that the spectral functor $P$ satisfies condition $L1)$. Since $P$ induces the identity map on objects, condition $L2')$ is automatically satisfied and so $P$ belongs to $\cw_l$.
\end{proof}
\begin{proposition}\label{cell1}
$J''\text{-}\mbox{cell} \subseteq \cw_l\,.$
\end{proposition}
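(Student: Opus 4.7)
The plan is to reduce the statement to a single pushout square and then verify conditions $L1)$ and $L2')$ in turn. Since $\cw_l$ is stable under transfinite composition by \cite[10.2.2]{Hirschhorn}, it suffices to show that whenever we are given a spectral functor $R:\underline{\mathbb{S}}\to \ca$ and form the pushout
$$
\xymatrix{
\underline{\mathbb{S}} \ar[d]_{A_{\ch}} \ar[r]^-R \ar@{}[dr]|{\lrcorner} & \ca \ar[d]^P \\
\Sigma^{\infty}(\ch_+) \ar[r] & \cb\,,
}
$$
the spectral functor $P$ belongs to $\cw_l$. Writing $x_0 := R(\ast)$ and letting $x \in \mbox{obj}(\ch)$ be the distinguished object with $A_{\ch}(\ast)=x$, one has $\mbox{obj}(\cb) = \mbox{obj}(\ca) \sqcup (\mbox{obj}(\ch)\setminus\{x\})$, and $P$ is the evident inclusion on objects.

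For condition $L1)$, I would analyse the hom-spectra $\cb(a,a')$ via a Schwede-Shipley type non-additive filtration---precisely the machinery to be developed in Appendix~\ref{app:SS}. The successive layers of this filtration are built out of iterated smash products of the spectra $\ca(-,x_0)$ and $\ca(x_0,-)$ with the ``reduced'' endomorphism spectrum of $x$ inside $\Sigma^{\infty}(\ch_+)$, which is controlled by $\Sigma^{\infty}((x^{\ast}\ch)_+)$. The key input is remark~\ref{nova}: by Stanculescu's lemma~\ref{key2}, the simplicial monoid $x^{\ast}\ch$ is cofibrant, so its associated symmetric ring spectrum is sufficiently free for the filtration layers to be (trivial) cofibrations. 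Combined with the monoid axiom (lemma~\ref{monoidax}) and the fact that $\ch$ has weakly contractible function complexes, this forces each successive quotient, hence the whole comparison map $\ca(a,a') \to \cb(Pa, Pa')$, to be a level equivalence. The same filtration, applied when one or both endpoints lie in the new part of $\mbox{obj}(\cb)$, yields the corresponding level equivalences needed for the remaining cases.

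Condition $L2')$ is comparatively soft: since the function complex $\ch(x,y)$ is weakly contractible and in particular non-empty and connected for each $y \in \mbox{obj}(\ch)$, every such $y$ is isomorphic to $x$ in $\pi_0(\ch)$. This property survives the functor $\Sigma^{\infty}(-_+)$ and the pushout, so each new object in $\mbox{obj}(\cb) \setminus P(\mbox{obj}(\ca))$ is isomorphic in $\pi_0(\cb_0)$ to $P(x_0)$, whence $\pi_0(P_0)$ is essentially surjective. The main obstacle is the filtration analysis underlying $L1)$: unlike the additive setting, a pushout of (ring) spectra does not decompose as a direct sum, and one must exploit the non-additive Schwede-Shipley filtration together with the monoid cofibrancy of $x^{\ast}\ch$ coming from remark~\ref{nova}. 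Once that technical step is in place, both $L1)$ and $L2')$ follow and the proposition is proved.
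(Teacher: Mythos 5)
Your reduction to a single pushout and your argument for condition $L2')$ are fine (every hom in $\ch$ is weakly contractible, so $\pi_0(\ch)$ is indiscrete and the new objects become isomorphic to $P(R(\ast))$ in $\pi_0(\cb_0)$; this is essentially the paper's argument). The gap is in $L1)$, which is the whole technical content of the proposition. You defer it to ``precisely the machinery to be developed in Appendix~\ref{app:SS}'', but that machinery (propositions~\ref{clef} and~\ref{clef1}) only treats pushouts along $U(K)\to U(L)$, i.e.\ the free attachment of a single hom-object between two objects. The functor $A_{\ch}:\underline{\mathbb{S}}\to\Sigma^{\infty}(\ch_+)$ is not of this form: it glues in an entire multi-object spectral category with nontrivial composition, and a filtration of the amalgamated pushout $\ca\amalg_{\underline{\mathbb{S}}}\Sigma^{\infty}(\ch_+)$ by alternating smash products of homs of $\ca$ and homs of $\Sigma^{\infty}(\ch_+)$ is a genuinely different (and harder) construction, which you never write down; asserting that cofibrancy of $x^{\ast}\ch$ makes the layers ``(trivial) cofibrations'' is exactly the step that needs proof, and your description of the layers also ignores the homs involving the second object $y$ of $\ch$.

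The paper avoids constructing such a filtration. It factors $A_{\ch}$ as $\underline{\mathbb{S}}\to x^{\ast}\Sigma^{\infty}(\ch_+)\hookrightarrow\Sigma^{\infty}(\ch_+)$ and splits the pushout into two. For the first pushout, which does not change the set of objects, it works in the model structure on $(\mathsf{Sp}^{\Sigma})^{\co}\text{-}\mbox{Cat}$ with fixed object set $\co$ (available by lemma~\ref{monoidax} and \cite[6.3]{SS3}): since $x^{\ast}\Sigma^{\infty}(\ch_+)=\Sigma^{\infty}((x^{\ast}\ch)_+)$ and $x^{\ast}\ch$ is a cofibrant simplicial monoid with weakly contractible function complex (remark~\ref{nova}, via Stanculescu's lemma~\ref{key2}), the map $\underline{\mathbb{S}}\to x^{\ast}\Sigma^{\infty}(\ch_+)$ is a trivial cofibration there, hence so is its cobase change $P_0$, giving $L1)$ for $P_0$ with no filtration analysis at all. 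For the second pushout, along the full inclusion $x^{\ast}\Sigma^{\infty}(\ch_+)\hookrightarrow\Sigma^{\infty}(\ch_+)$, it invokes \cite[5.2]{Latch} to conclude that $\widetilde{\ca}\to\cb$ is again a full spectral subcategory inclusion, so the homs between old objects are unchanged and $P_1$ satisfies $L1)$ trivially. If you want to keep your direct-filtration route you would have to actually build and analyze that filtration (it is not in the appendix); otherwise you should replace that step by the two-stage factorization together with the fixed-object-set model structure and the Fritsch--Latch result.
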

\begin{proof}
Since the class $\cw_l$ is stable under transfinite compositions, it is enough to prove the following: let $\ca$ be a small spectral category and $R:\underline{\mathbb{S}} \rightarrow \ca$ a spectral functor. Consider the following pushout
$$
\xymatrix{
\underline{\mathbb{S}} \ar[d]_{A_{\ch}} \ar[r]^R \ar@{}[dr]|{\lrcorner} & \ca \ar[d]^P\\
\Sigma^{\infty}(\ch_+) \ar[r] & \cb\,.
}
$$
We need to show that $P$ belongs to $\cw_l$. We start by showing condition $L1)$. Factor the spectral functor $A_{\ch}$ as
$$ \underline{\mathbb{S}} \longrightarrow x^{\ast}\Sigma^{\infty}(\ch_+) \hookrightarrow \Sigma^{\infty}(\ch_+)\,,$$
where $x^{\ast}\Sigma^{\infty}(\ch_+)$ is the full spectral subcategory of $\Sigma^{\infty}(\ch_+)$ whose set of objects is $\{x\}$ (\ref{notkey}). Consider the iterated pushout
$$
\xymatrix{
*+<1pc>{\underline{\mathbb{S}}} \ar@{>->}[d]^{\sim} \ar[r]^R \ar@{}[dr]|{\lrcorner} & \ca \ar[d]_{P_0} \ar@/^1pc/[dd]^P\\
x^{\ast}\Sigma^{\infty}(\ch_+) \ar[r] \ar@{^{(}->}[d] \ar@{}[dr]|{\lrcorner} & \widetilde{\ca} \ar[d]_{P_1} \\
\Sigma^{\infty}(\ch_+) \ar[r] & \cb \,.
}
$$
In the lower pushout, since $x^{\ast}\Sigma^{\infty}(\ch_+)$ is a full spectral subcategory of $\Sigma^{\infty}(\ch_+)$, proposition~\cite[5.2]{Latch} implies that $\widetilde{\ca}$ is a full spectral subcategory of $\cb$ and so $P_1$ satisfies condition $L1)$.

In the upper pushout, since $x^{\ast}\Sigma^{\infty}(\ch_+)=\Sigma^{\infty}((x^{\ast}\ch)_+)$ and $x^{\ast}\ch$ is a cofibrant simplicial category (\ref{nova}), the spectral functor $\xymatrix{*+<1pc>{\underline{\mathbb{S}}} \ar@{>->}[r]^-{\sim} & x^{\ast}\Sigma^{\infty}(\ch_+)}$ is a trivial cofibration. Now, let $\co$ denote the set of objects of $\ca$ (notice that if $\ca=\emptyset$, then there is no spectral functor $R$) and $\co':=\co \backslash R(\ast)$. By lemma~\ref{monoidax} and proposition~\cite[6.3]{SS3}, the category $(\mathsf{Sp}^{\Sigma})^{\co}\text{-}\mbox{Cat}$ of spectral categories with a fixed set of objects $\co$ carries a natural Quillen model structure. Notice that $\widetilde{\ca}$ identifies with the following pushout in $(\mathsf{Sp}^{\Sigma})^{\co}\text{-}\mbox{Cat}$
$$
\xymatrix{
\underset{\co'}{\coprod} \,\underline{\mathbb{S}} \amalg \underline{\mathbb{S}} \ar@{>->}[d]^{\sim} \ar[rr]^R \ar@{}[drr]|{\lrcorner} && *+<1.5pc>{\ca} \ar@{>->}[d]^{P_0}_{\sim}\\
\underset{\co'}{\coprod} \,\underline{\mathbb{S}} \amalg x^{\ast}\Sigma^{\infty}(\ch_+) \ar[rr] && \widetilde{\ca}\,.
}
$$
Since the left vertical arrow is a trivial cofibration so it is $P_0$. In particular $P_0$ satisfies condition $L_1)$ and so we conclude that the composed spectral functor $P$ satisfies also condition $L_1)$.

We now show that $P$ satisfies condition $L2')$. Let $f$ be a $0$-simplex in $\ch(x,y)$. By construction~\cite{Bergner} of the simplicial categories $\ch$, $f$ becomes invertible in $\pi_0(\ch)$. We consider it as a morphism in the spectral category $\Sigma^{\infty}(\ch_+)$. Notice that the spectral category $\cb$ is obtained from $\ca$, by gluing $\Sigma^{\infty}(\ch_+)$ to the object $R(\ast)$. Since $f$ clearly becomes invertible in $\pi_0(\Sigma^{\infty}(\ch_+))_0$, its image by the spectral functor $\Sigma^{\infty}(\ch_+) \rightarrow \cb$ becomes invertible in $\pi_0(\cb_0)$. This implies that the functor
$$ \pi_0(P_0): \pi_0(\ca_0) \longrightarrow \pi_0(\cb_0)$$
is essentially surjective and so $P$ satisfies condition $L2')$. In conclusion, $P$ satisfies condition $L1)$ and $L2')$ and so it belongs to $\cw_l$.
\end{proof}

We have shown that $J\text{-}\mbox{cell} \subseteq \cw_l$ (lemma~\ref{cell} and proposition~\ref{cell1}) and that $I\text{-}\mbox{inj} = J\text{-}\mbox{inj} \cap \cw_l$ (lemmas~\ref{Iinj} and \ref{total}). This implies that the last three conditions of the recognition theorem~\cite[2.1.19]{Hovey} are satisfied. This finishes the proof of theorem~\ref{levelwise}.
\subsection*{Properties}
\begin{proposition}\label{Fibrations}
A spectral functor $F:\ca \rightarrow \cb$ is a fibration with respect to the model structure of theorem~\ref{levelwise}, if and only if it satisfies conditions $F1)$ and $F2)$ of lemma~\ref{fibrations}.
\end{proposition}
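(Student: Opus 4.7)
The plan is to observe that this proposition is essentially a direct consequence of the cofibrantly generated structure established in theorem~\ref{levelwise} together with lemma~\ref{fibrations}; there is really nothing new to do.

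First, I would recall the standard fact about cofibrantly generated model categories (see for instance \cite[2.1.19]{Hovey}): once we know that $\mathsf{Sp}^{\Sigma}\text{-}\mbox{Cat}$ carries a cofibrantly generated model structure with $J$ as the set of generating trivial cofibrations, the fibrations are characterized as precisely the class $J\text{-}\mbox{inj}$ of morphisms having the right lifting property with respect to $J$. Theorem~\ref{levelwise} supplies exactly this: the levelwise model structure is cofibrantly generated with generating trivial cofibrations given by the set $J=J'\cup J''$ of definition~\ref{defa}.

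Then I would simply invoke lemma~\ref{fibrations}, which already identifies $J\text{-}\mbox{inj}$ with the class of spectral functors $F:\ca\rightarrow\cb$ simultaneously satisfying conditions $F1)$ and $F2)$. Combining these two observations yields the proposition.

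No step is an obstacle; the content was done in lemma~\ref{fibrations}. The only thing to be careful about is to phrase the argument so it is clear that we are using the abstract recognition principle for fibrations in a cofibrantly generated model structure, rather than re-deriving the lifting characterization from scratch.
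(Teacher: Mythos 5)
Your proposal is correct and matches the paper's own argument exactly: by the recognition theorem \cite[2.1.19]{Hovey}, $J$ is a set of generating trivial cofibrations, so the fibrations are precisely $J\text{-}\mbox{inj}$, which lemma~\ref{fibrations} identifies with the spectral functors satisfying $F1)$ and $F2)$.
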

\begin{proof}
This follows from lemma~\ref{fibrations}, since by the recognition theorem~\cite[2.1.19]{Hovey}, the set $J$ is a set of generating trivial cofibrations.
\end{proof}

\begin{corollary}\label{fiblev}
A spectral category $\ca$ is fibrant with respect to the model structure of theorem~\ref{levelwise}, if and only if $\ca(x,y)$ is a levelwise Kan simplicial set for all objects $x,y \in \ca$.
\end{corollary}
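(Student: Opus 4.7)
My plan is to apply Proposition~\ref{Fibrations} to the unique spectral functor from $\ca$ to the terminal spectral category $\underline{\ast}$ (one object, whose endomorphism spectrum is the terminal symmetric spectrum, a point at every level). Fibrancy of $\ca$ is equivalent to this functor satisfying F1) and F2) of Lemma~\ref{fibrations}.

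For the $(\Rightarrow)$ direction, F1) requires each morphism $\ca(x,y) \to \underline{\ast}(\ast,\ast)$ to be a fibration in the projective level model structure on $\mathsf{Sp}^{\Sigma}$. Since this model structure defines its fibrations levelwise and the target is a point at every level, F1) is equivalent to $\ca(x,y)_n$ being a Kan complex for all $n \geq 0$, which is exactly the asserted condition.

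For the $(\Leftarrow)$ direction, assume $\ca(x,y)$ is levelwise Kan for all $x,y \in \ca$. Then F1) holds by the same calculation. For F2), I need $\ca_0 \to \underline{\ast}_0$ to be a fibration in the Bergner model structure on $s\mathbf{Set}\text{-}\mbox{Cat}$, which reduces to Bergner-fibrancy of $\ca_0$. Unpacking this through the generating trivial cofibrations of $s\mathbf{Set}\text{-}\mbox{Cat}$ amounts to two conditions: first, that each hom-space $\ca_0(x,y) = \ca(x,y)_0$ be a Kan complex, which is the $n=0$ instance of the hypothesis; second, the RLP against $\{x\}\hookrightarrow \ch$, which is automatic when the codomain is terminal, since any chosen $\{x\}\to \ca_0$ extends to $\ch \to \ca_0$ by the constant simplicial functor sending every object to $x$ and every morphism to $\mathrm{id}_x$.

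I do not anticipate a real obstacle here; the argument is essentially a direct unpacking of the fibration criterion from Lemma~\ref{fibrations}, and the only mild subtlety is to observe that the $J''$-part of the lifting condition is vacuous when mapping to a terminal spectral category.
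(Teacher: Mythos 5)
Your proof is correct and follows the same route the paper intends: apply Proposition~\ref{Fibrations} to the unique spectral functor $\ca \to \ast$, so that fibrancy becomes conditions F1) and F2), with F1) giving exactly the levelwise Kan condition. The only (harmless) difference is that instead of invoking Bergner's characterization of the fibrant objects of $s\mathbf{Set}\text{-}\mbox{Cat}$ (all mapping spaces Kan), you verify the lifting against the set $(A2)$ by hand via the constant simplicial functor at the chosen object, which is a valid substitute.
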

Notice that by proposition~\ref{Fibrations} we have a Quillen adjunction
$$
\xymatrix{
\mathsf{Sp}^{\Sigma}\text{-}\mbox{Cat} \ar@<1ex>[d]^{(-)_0} \\
s\mathbf{Set}\text{-}\mbox{Cat} \ar@<1ex>[u]^{\Sigma^{\infty}(-_+)}\,. 
}
$$
\begin{proposition}\label{Rproper}
The Quillen model structure on $\mathsf{Sp}^{\Sigma}\text{-}\mbox{Cat}$ of theorem~\ref{levelwise} is right proper.
\end{proposition}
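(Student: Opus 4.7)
The plan is to verify right properness by checking conditions $L1)$ and $L2)$ separately on a pullback square, reducing each to the right properness of an ambient model structure. Consider a pullback
$$
\xymatrix{
\ca\times_\cb\cc \ar[r]^-{\pi_2} \ar[d]_{\pi_1} & \cc \ar[d]^F \\
\ca \ar[r]^W & \cb
}
$$
in $\mathsf{Sp}^{\Sigma}\text{-}\mbox{Cat}$, with $F$ a fibration and $W$ a levelwise quasi-equivalence. Such a pullback is computed objectwise: the object set is $\mbox{obj}(\ca)\times_{\mbox{obj}(\cb)}\mbox{obj}(\cc)$ and for objects $(a,c),(a',c')$ one has
$$
(\ca\times_\cb\cc)\bigl((a,c),(a',c')\bigr) \;=\; \ca(a,a')\times_{\cb(Wa,Wa')}\cc(c,c'),
$$
the pullback being formed levelwise in $\mathsf{Sp}^{\Sigma}$. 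I need to show that $\pi_2$ is a levelwise quasi-equivalence.

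For condition $L1)$, lemma~\ref{fibrations} says that each $F(c,c')$ is a projective-level fibration in $\mathsf{Sp}^{\Sigma}$, and $L1)$ for $W$ says that each $W(a,a')$ is a level equivalence. The projective level model structure on $\mathsf{Sp}^{\Sigma}$ is defined levelwise from the right proper model structure on $s\mathbf{Set}_{\bullet}$ and is therefore itself right proper; consequently the pullback projection to $\cc(c,c')$ is again a level equivalence, which is exactly $L1)$ for $\pi_2$.

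For condition $L2)$, I observe that the functor $(-)_0:\mathsf{Sp}^{\Sigma}\text{-}\mbox{Cat}\to s\mathbf{Set}\text{-}\mbox{Cat}$ preserves pullbacks, since evaluating a symmetric spectrum at level $0$ commutes with limits. Hence $(\ca\times_\cb\cc)_0 = \ca_0\times_{\cb_0}\cc_0$. By lemma~\ref{fibrations}, $F_0$ is a fibration in Bergner's model structure on $s\mathbf{Set}\text{-}\mbox{Cat}$, and by $L2)$ for $W$ the simplicial functor $W_0$ is a DK-equivalence. The key external input is the right properness of Bergner's model structure~\cite{Bergner}; granting this, $(\pi_2)_0$ is a DK-equivalence, giving $L2)$ for $\pi_2$. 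Combining the two conditions yields that $\pi_2$ is a levelwise quasi-equivalence, establishing right properness. The only step with genuine content is the right properness of Bergner's model structure on simplicial categories; everything else is a routine transfer through the levelwise definitions and through the pullback-preserving functor $(-)_0$.
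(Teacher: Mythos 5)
Your proposal is correct and follows essentially the same route as the paper: check $L1)$ using that pullbacks of hom-spectra are computed in $\mathsf{Sp}^{\Sigma}$, where the projective level model structure is right proper and $F(x,y)$ is a level fibration, and check $L2)$ by applying the limit-preserving functor $(-)_0$ and invoking the right properness of Bergner's model structure on $s\mathbf{Set}\text{-}\mbox{Cat}$ together with the fact that $F_0$ is a fibration there (proposition~\ref{Fibrations}). The only external inputs, right properness of the projective level structure and of Bergner's structure, are exactly the ones the paper cites.
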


\begin{proof}
Consider the following pullback square in $\mathsf{Sp}^{\Sigma}\text{-}\mbox{Cat}$
$$
\xymatrix{
\ca \underset{\cb}{\times}\cc \ar[d] \ar[r]^-P \ar@{}[dr]|{\ulcorner} & \cc \ar@{->>}[d]^F\\
\ca \ar[r]_R^{\sim} & \cb\,,
}
$$
with $R$ a levelwise quasi-equivalence and $F$ a fibration. We need to show that $P$ is a levelwise quasi-equivalence. Notice that pullbacks in $\mathsf{Sp}^{\Sigma}\text{-}\mbox{Cat}$ are calculated on objects and on symmetric spectra morphisms. Since the projective level model structure on $\mathsf{Sp}^{\Sigma}$ is right proper \cite[III-1.9]{Schwede} and $F$ satisfies condition $F1)$, the spectral functor $P$ satisfies condition $L1)$. Notice that the composed functor
$$\xymatrix{ \mathsf{Sp}^{\Sigma}\text{-}\mbox{Cat} \ar[r]^{(-)_0} & s\mathbf{Set}_{\bullet}\text{-}\mbox{Cat} \ar[r] & s\mathbf{Set}\text{-}\mbox{Cat}}$$
commutes with limits and that by proposition~\ref{Fibrations}, $F_0$ is a fibration in $s\mathbf{Set}\text{-}\mbox{Cat}$. Since the model structure on $s\mathbf{Set}\text{-}\mbox{Cat}$ is right proper \cite[3.5]{Bergner} and $R_0$ is a DK-equivalence, we conclude that the spectral functor $P$ satisfies also condition $L2)$.
\end{proof}

\begin{proposition}\label{cof}
Let $\ca$ be a cofibrant spectral category (in the Quillen model structure of theorem~\ref{levelwise}). Then for all objects $x,y \in \ca$, the symmetric spectra $\ca(x,y)$ is cofibrant in the projective level model structure on $\mathsf{Sp}^{\Sigma}$ \cite[III-1.9]{Schwede}.
\end{proposition}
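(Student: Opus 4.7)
The plan is to use transfinite induction on the cellular construction of $\ca$. Since projective cofibrations in $\mathsf{Sp}^{\Sigma}$ are stable under retracts, and since a retract $\ca \to \cb \to \ca$ in $\mathsf{Sp}^{\Sigma}\text{-}\mbox{Cat}$ induces, for each pair $x,y \in \ca$, a retract $\ca(x,y) \to \cb(fx,fy) \to \ca(x,y)$ of symmetric spectra, it suffices to handle the case in which $\ca$ is an $I$-cell complex. Write such $\ca$ as a transfinite composition $\emptyset = \ca_0 \to \ca_1 \to \cdots \to \ca_\lambda = \ca$ in which every $\ca_\beta \to \ca_{\beta+1}$ is a pushout along a generating cofibration from Definition~\ref{defc}. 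I will prove by transfinite induction that each $\ca_\beta(x,y)$ is projectively cofibrant and each transition $\ca_\beta(x,y) \to \ca_{\beta+1}(x,y)$ is a projective cofibration.

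The easy cases are dispatched first. The base case is vacuous, and at a limit ordinal the hom spectrum is a transfinite composition of projective cofibrations (by the inductive hypothesis), hence cofibrant. For a successor step along $C: \emptyset \to \underline{\mathbb{S}}$ one has $\ca_{\beta+1} = \ca_\beta \amalg \underline{\mathbb{S}}$: existing hom spectra are unchanged, the new endomorphism spectrum is the sphere $\mathbb{S}$ (projectively cofibrant as the free level-$0$ spectrum on $S^0$), and hom spectra between the new and old objects are the zero spectrum, which is the initial object of $\mathsf{Sp}^{\Sigma}$ and hence cofibrant.

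The main obstacle is the successor step along $C_{m,n}: U(F_m\partial\Delta[n]_+) \to U(F_m\Delta[n]_+)$. Here $\ca_{\beta+1}$ is obtained from $\ca_\beta$ by freely attaching an $(F_m,n)$-cell to the hom spectrum at the pair of objects $x,y \in \ca_\beta$ specified by the attaching functor. To understand the hom spectra of the pushout I would invoke the Schwede--Shipley non-additive filtration developed in Appendix~\ref{app:SS}: it describes $\ca_{\beta+1}(u,v)$ as a sequential colimit whose successive layers are obtained by smashing the generating cofibration $F_m\partial\Delta[n]_+ \to F_m\Delta[n]_+$ with iterated smash products of hom spectra of $\ca_\beta$ (in a pushout-product pattern over $\mathbb{S}$). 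Because the projective level structure on $\mathsf{Sp}^{\Sigma}$ is monoidal and, by the inductive hypothesis, the hom spectra of $\ca_\beta$ are projectively cofibrant, every layer is a projective cofibration between cofibrant spectra. Consequently $\ca_\beta(u,v) \to \ca_{\beta+1}(u,v)$ is a projective cofibration with cofibrant target.

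Combining the three cases closes the induction. The entire weight of the argument is concentrated in the filtration analysis for $C_{m,n}$, which is precisely what the appendix is designed to supply; everything else is formal manipulation of coproducts, transfinite composites and retracts.
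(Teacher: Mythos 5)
Your proposal is correct and follows essentially the same route as the paper: reduce to retracts of $I$-cell complexes, induct over the cell attachments, treat the pushout along $C:\emptyset\to\underline{\mathbb{S}}$ by inspection, and handle the pushout along $C_{m,n}$ via the non-additive filtration of Appendix~\ref{app:SS}, which the paper packages as proposition~\ref{clef1} (cofibrancy of the hom spectra of $\ca_\beta$ plus the pushout product axiom gives that each $\ca_\beta(u,v)\to\ca_{\beta+1}(u,v)$ is a projective cofibration). Your write-up merely makes the transfinite induction and the limit-ordinal step explicit, which the paper leaves implicit.
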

\begin{proof}
The Quillen model structure of theorem~\ref{levelwise} is cofibrantly generated and so any cofibrant object in $\mathsf{Sp}^{\Sigma}\text{-}\mbox{Cat}$ is a retract of a $I$-cell complex~\cite[11.2.2]{Hirschhorn}. Since cofibrations are stable under transfinite composition it is enough to prove the proposition for pushouts along a generating cofibration. Let $\ca$ a spectral category such that $\ca(x,y)$ is cofibrant for all objects $x,y \in \ca$:
\begin{itemize}
\item[-] consider the following pushout
$$ 
\xymatrix{
\emptyset \ar[d]_C \ar[r] \ar@{}[dr]|{\lrcorner} & \ca \ar[d] \\
\underline{\mathbb{S}} \ar[r] & \cb\,.
}
$$
Notice that $\cb$ is obtained from $\ca$, by simply introducing a new object. It is then clear that, for all objects $x,y \in \cb$, the symmetric spectra $\cb(x,y)$ is cofibrant.
\item[-] Now, consider the following pushout
$$
\xymatrix{
U(F_m\partial \Delta[n]_+) \ar[r] \ar[d]_{C_{m,n}} \ar@{}[dr]|{\lrcorner} & \ca \ar[d]^P\\
U(F_m\Delta[n]_+) \ar[r] & \cb\,.
}
$$
Notice that $\ca$ and $\cb$ have the same set of objects and $P$ induces the identity map on the set of objects.
Since $F_m\partial \Delta[n]_+ \rightarrow F_m\Delta[n]_+$ is a projective cofibration, proposition~\ref{clef1} implies that the morphism of symmetric spectra
$$P(x,y):\ca(x,y) \longrightarrow \cb(x,y)$$
is still a projective cofibration. Finally, since the $I$-cell complexes in $\mathsf{Sp}^{\Sigma}\text{-}\mbox{Cat}$ are built from $\emptyset$ (the initial object), the proposition is proven.
\end{itemize}
\end{proof}

\begin{lemma}\label{U}
The functor
$$ U: \mathsf{Sp}^{\Sigma} \longrightarrow \mathsf{Sp}^{\Sigma}\text{-}\mbox{Cat}\,\,\,\,\,\,\,\,\,(\ref{funcU})$$
sends projective cofibrations to cofibrations.
\end{lemma}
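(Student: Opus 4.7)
The plan is to reduce everything to preservation of colimits by $U$, combined with the observation that the cofibrations $C_{m,n}$ of theorem~\ref{levelwise} are by construction just $U$ applied to the generating cofibrations of the projective level model structure on symmetric spectra.

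First I would recall that the projective level model structure on $\mathsf{Sp}^{\Sigma}$ is cofibrantly generated, so every projective cofibration is a retract of a transfinite composition of pushouts of the generating cofibrations $F_m\partial\Delta[n]_+ \hookrightarrow F_m\Delta[n]_+$, $m,n \geq 0$ (see \cite[III-1.9]{Schwede}). Similarly, the cofibrations in $\mathsf{Sp}^{\Sigma}\text{-}\mbox{Cat}$ are, by theorem~\ref{levelwise}, the retracts of transfinite compositions of pushouts of the spectral functors $C_{m,n}$ and $C$ of definition~\ref{defc}. Therefore it suffices to show that $U$ carries the three closure operations (pushout along an arbitrary map, transfinite composition, retract) from $\mathsf{Sp}^{\Sigma}$ to $\mathsf{Sp}^{\Sigma}\text{-}\mbox{Cat}$, and that it sends the generators of the first class into the generators of the second.

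Next, the sending of generators is immediate from definition~\ref{defc}: by construction $U(F_m\partial\Delta[n]_+ \to F_m\Delta[n]_+) = C_{m,n}$, which is one of the generating cofibrations of $\mathsf{Sp}^{\Sigma}\text{-}\mbox{Cat}$. For the closure operations, the key input is that the functor $U$ referenced in (\ref{funcU}) is a left adjoint (this is how $U$ is introduced in the appendix, with its right adjoint extracting a Hom-spectrum between two distinguished objects). Being a left adjoint, $U$ preserves all small colimits; in particular it preserves pushouts and transfinite compositions. Preservation of retracts is automatic for any functor.

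Putting this together, if $X \to Y$ is a projective cofibration in $\mathsf{Sp}^{\Sigma}$, write it as a retract of a (possibly transfinite) composition of pushouts of maps $F_m\partial\Delta[n]_+ \to F_m\Delta[n]_+$; applying $U$ yields a retract of a transfinite composition of pushouts of the $C_{m,n}$, which is a cofibration in $\mathsf{Sp}^{\Sigma}\text{-}\mbox{Cat}$. No real obstacle is expected here; the only point requiring care is making sure one is citing the correct adjointness property of $U$ from the appendix and not confusing the projective level and projective stable cofibrations, both of which have the same generating set and therefore yield the same conclusion in either reading.
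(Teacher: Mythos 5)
Your overall strategy coincides with the paper's own proof of lemma~\ref{U}: write a projective cofibration as a retract of a transfinite composition of pushouts of the generating cofibrations $F_m\partial\Delta[n]_+ \rightarrow F_m\Delta[n]_+$, note that $U$ sends these generators to the generating cofibrations $C_{m,n}$ of definition~\ref{defc}, and conclude from the fact that $U$ preserves retracts and the colimits involved. So the decomposition and the reduction to generators are exactly right.

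The one problematic point is your justification of colimit preservation: the functor $U:\mathsf{Sp}^{\Sigma}\rightarrow \mathsf{Sp}^{\Sigma}\text{-}\mbox{Cat}$ of definition~\ref{funcU} is \emph{not} a left adjoint, and it is not introduced as one in the appendix (it is simply defined there). Indeed, a spectral functor $U(X)\rightarrow \cb$ amounts to the choice of an ordered pair of objects $(a,b)$ of $\cb$ together with a morphism $X\rightarrow \cb(a,b)$ (the unit axiom forces the two endomorphism components), so $\mathsf{Sp}^{\Sigma}\text{-}\mbox{Cat}(U(X),\cb)$ is a coproduct over pairs of objects and is not corepresented by a single spectrum; concretely, $U$ does not preserve coproducts ($U(X\amalg Y)$ has two objects while $U(X)\amalg U(Y)$ has four) nor the initial object, hence cannot preserve all colimits. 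What the argument actually needs, and what is true, is that $U$ preserves the specific colimits occurring in a relative cell complex: pushouts of $U(K)\rightarrow U(L)$ along maps $U(K)\rightarrow U(Z)$, and transfinite compositions of the resulting maps. All functors in these diagrams are the identity on the object set $\{1,2\}$, so these colimits are computed in the category $(\mathsf{Sp}^{\Sigma})^{\{1,2\}}\text{-}\mbox{Cat}$ of spectral categories with fixed object set $\{1,2\}$ (the same device the paper uses in the proof of proposition~\ref{cell1}), and \emph{there} $U$ is indeed left adjoint to $\cb\mapsto\cb(1,2)$, hence preserves them. With this correction (or with a direct verification that $U$ preserves these particular pushouts and sequential colimits), your proof is complete and agrees with the paper's.
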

\begin{proof}
The Quillen model structure of theorem~\ref{levelwise} is cofibrantly generated and so any cofibration in $\mathsf{Sp}^{\Sigma}\text{-}\mbox{Cat}$ is a retract of a transfinite composition of pushouts along the generating cofibrations. Since the functor $U$ preserves retractions, colimits and send the generating projective cofibrations to (generating) cofibrations, the lemma is proven.
\end{proof}
\section{Stable quasi-equivalences}
In this chapter we construct a `localized' Quillen model structure on $\mathsf{Sp}^{\Sigma}\text{-}\mbox{Cat}$. We denote by $[-,-]$ the set of morphisms in the stable homotopy category $\mathsf{Ho}(\mathsf{Sp}^{\Sigma})$ of symmetric spectra. From a spectral category $\ca$ one can form a genuine category $[\ca]$ by keeping the same set of objects and defining the set of morphisms between $x$ and $y$ in $[\ca]$ to be $[\mathbb{S}, \ca(x,y)]$. We obtain in this way a functor
$$ [-]: \mathsf{Sp}^{\Sigma}\text{-}\mbox{Cat} \longrightarrow \mbox{Cat}\,,$$
with values in the category of small categories.
\begin{definition}\label{stableeq}
A spectral functor $F:\ca \rightarrow \cb$ is a {\em stable quasi-equivalence} if:
\begin{itemize}
\item[S1)] for all objects $x,y \in \ca$, the morphism of symmetric spectra
$$ F(x,y):\ca(x,y) \rightarrow \cb(Fx,Fy) $$
is a stable equivalence \cite[II-4.1]{Schwede} and
\item[S2)] the induced functor
$$ [F]:[\ca] \longrightarrow [\cb]$$
is an equivalence of categories.
\end{itemize}
\end{definition}
\begin{notation}
We denote by $\cw_s$ the class of stable quasi-equivalences.
\end{notation}
\begin{remark}
Notice that if condition $S1)$ is verified, condition $S2)$ is equivalent to:
\begin{itemize}
\item[S2')] the induced functor
$$ [F]:[\ca] \longrightarrow [\cb] $$
is essentially surjective.
\end{itemize}
\end{remark}
\subsection*{Functor $Q$}
In this subsection we construct a functor
$$ Q:\mathsf{Sp}^{\Sigma}\text{-}\mbox{Cat} \longrightarrow \mathsf{Sp}^{\Sigma}\text{-}\mbox{Cat}$$
and a natural transformation $\eta:\mbox{Id} \rightarrow Q$, from the identity functor on $\mathsf{Sp}^{\Sigma}\text{-}\mbox{Cat}$ to the functor $Q$. We start with a few definitions (see the proof of proposition~\cite[II-4.21]{Schwede}).
Let $m \geq 0$ and $\lambda_m:F_{m+1}S^1 \rightarrow F_mS^0$ the morphism of symmetric spectra which is adjoint to the wedge summand inclusion $S^1 \rightarrow (F_mS^0)_{m+1}= \Sigma^+_{m+1}\wedge S^1$ indexed by the identity element. The morphism $\lambda_m$ factors through the mapping cylinder as $\lambda_m=r_m c_m$ where $c_m:F_{m+1}S^1 \rightarrow Z(\lambda_m)$ is the `front' mapping cylinder inclusion and $r_m:Z(\lambda_m) \rightarrow F_mS^0$ is the projection (which is a homotopy equivalence). Notice that $c_m$ is a trivial cofibration~\cite[3.4.10]{HSS} in the projective stable model structure. Define the set $K$ as the set of all pushout product maps
$$ \xymatrix{ \Delta[n]_+ \wedge F_{m+1}S^1\underset{\partial \Delta[n]_+\wedge F_{m+1}S^1}{\coprod} \partial \Delta[n]_+\wedge Z(\lambda_m) \ar[d]_{i_{n_+}\wedge c_m}  \\
  \Delta[n]_+ \wedge Z(\lambda_m)},$$
where $i_{n_+}:\partial\Delta[n] \rightarrow \Delta[n],\,\, n\geq 0$ is the inclusion map. Let $FI_{\Lambda}$ be the set of all morphisms of symmetric spectra $F_m\Lambda[k,n]_+ \rightarrow F_m\Delta[n]_+$~\cite[I-2.12]{Schwede} induced by the horn inclusions for $m\geq 0, n \geq 1, 0 \leq k\leq n$. 

\begin{remark}\label{Omega}
By adjointness, a symmetric spectrum $X$ has the R.L.P. with respect to the set $FI_{\Lambda}$ if and only if for all $n\geq 0$, $X_n$ is a Kan simplicial set and it has the R.L.P. with respect to the set $K$ if and only if the induced map of simplicial sets
$$ \mbox{Map}(c_m,X):\mbox{Map}(Z(\lambda_m),X) \longrightarrow \mbox{Map}(F_{m+1}S^1, X) \simeq \Omega X_{m+1}$$
has the R.L.P. with respect to all inclusions $i_n, \, n\geq 0$, i.e. it is a trivial Kan fibration of simplicial sets. Since the mapping cylinder $Z(\lambda_m)$ is homotopy equivalent to $F_mS^0$, $\mbox{Map}(Z(\lambda_m),X)$ is homotopy equivalent to $\mbox{Map}(F_mS^0,X) \simeq X_m$.

So altogether, the R.L.P. with respect to the union set $K\cup FI_{\Lambda}$ implies that for $n \geq 0$, $X_n$ is a Kan simplicial set and for $m \geq 0$, $\widetilde{\delta_m}:X_m \rightarrow \Omega X_{m+1}$ is a weak equivalence, i.e. $X$ is an $\Omega$-spectrum. 

Notice that the converse is also true. Let $X$ be an $\Omega$-spectrum. For all $n \geq 0$, $X_n$ is a Kan simplicial set, and so $X$ has the R.L.P. with respect to the set $FI_{\Lambda}$. Moreover, since $c_m$ is a cofibration, the map $\mbox{Map}(c_m,X)$ is a Kan fibration~\cite[II-3.2]{Jardine}. Since for $m \geq 0$, $\widetilde{\delta_m}:X_m \rightarrow \Omega X_{m+1}$ is a weak equivalence, the map $\mbox{Map}(c_m,X)$ is in fact a trivial Kan fibration.
\end{remark}
Now consider the set $U(K\cup FI_{\Lambda})$ of spectral functors obtained by applying the functor $U$ (\ref{funcU}) to the set $K\cup FI_{\Lambda}$. Since the domains of the elements of the set $K\cup FI_{\Lambda}$ are sequentially small in $\mathsf{Sp}^{\Sigma}$, the same holds by \cite{Kelly} to the domains of the elements of $U(K\cup FI_{\Lambda})$. Notice that $U(K\cup FI_{\Lambda})=U(K)\cup J'$ (\ref{defa}). 
\begin{definition}\label{OmegaF}
Let $\ca$ be a small spectral category. The functor $Q:\mathsf{Sp}^{\Sigma}\text{-}\mbox{Cat} \rightarrow \mathsf{Sp}^{\Sigma}\text{-}\mbox{Cat}$ is obtained by applying the small object argument, using the set $U(K)\cup J'$ to factor the spectral functor
$$ \ca \longrightarrow \bullet,$$
where $\bullet$ denotes the terminal object in $\mathsf{Sp}^{\Sigma}\text{-}\mbox{Cat}$. 
\end{definition}
\begin{remark}\label{Omeg}
We obtain in this way a functor $Q$ and a natural transformation $\eta: \mbox{Id} \rightarrow Q$. Notice also that $Q(\ca)$ has the same objects as $\ca$, and the R.L.P. with respect to the set $U(K)\cup J'$. By remark~\ref{Omega} and \cite{Kelly}, we get the following property:
\begin{itemize}
\item[$\Omega)$] for all objects $x,y \in Q(\ca)$, the symmetric spectrum $Q(\ca)(x,y)$ is an $\Omega$-spectrum.
\end{itemize}
\end{remark}
\begin{proposition}\label{fibres}
Let $\ca$ be a small spectral category. The spectral functor
$$ \eta_{\ca}: \ca \longrightarrow Q(\ca)$$
is a stable quasi-equivalence.
\end{proposition}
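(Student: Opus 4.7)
The plan is to verify conditions S1) and S2) separately, but to first reduce to S1) by observing that $\eta_{\ca}$ is the identity on objects. Indeed, $Q(\ca)$ is built by the small object argument applied to the set $U(K)\cup J'$, each of whose elements is of the form $U(f)$ for some morphism $f$ in $\mathsf{Sp}^{\Sigma}$. The functor $U$ produces spectral categories with a fixed set of designated objects, so any pushout along such a $U(f)$ alters only the hom-spectra and never the object set. Hence $\eta_{\ca}$ induces the identity on objects, and condition S2') is automatic once S1) is in place.

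For S1), I would reduce to a single pushout step, using that stable equivalences of symmetric spectra are closed under transfinite composition (being the weak equivalences of the projective stable model structure). The task then splits into two cases according to whether the pushout is taken along an element of $J'$ or of $U(K)$. If the pushout is along an $A_{m,k,n}\in J'$, lemma~\ref{cell} already gives that the resulting spectral functor $P$ is a levelwise quasi-equivalence, so $P(x,y)$ is a level equivalence, hence \emph{a fortiori} a stable equivalence.

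The interesting case is a pushout along some $U(f)$ with $f\in K$. By construction, each $f\in K$ is a pushout-product of the boundary inclusion $i_{n_+}$ with the map $c_m:F_{m+1}S^1\to Z(\lambda_m)$, and $c_m$ is a trivial cofibration in the projective \emph{stable} model structure on $\mathsf{Sp}^{\Sigma}$. Since the stable structure is monoidal, so is the pushout-product $f$. I would then invoke the stable analog of proposition~\ref{clef} (the non-additive filtration from appendix~\ref{app:SS}) to express the induced map $P(x,y)$ as a transfinite composition of cobase changes of maps of the form $\ca'(-,-)\wedge f\wedge \ca'(-,-)$. The stable version of the monoid axiom for $\mathsf{Sp}^{\Sigma}$ then forces each such layer to be a stable equivalence, and stability of stable equivalences under pushout and transfinite composition concludes that $P(x,y)$ is a stable equivalence.

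The main obstacle is precisely this last step: carrying through the Schwede--Shipley non-additive filtration argument in the \emph{stable} setting, where lemma~\ref{monoidax} (which handles the level case) is no longer available and one must instead rely on the stable monoid axiom. The delicate point is verifying that smashing an arbitrary symmetric spectrum, coming from the hom-objects of $\ca'$, with the stable trivial cofibration $f$, and then iteratively taking pushouts and transfinite compositions, never leaves the class of stable equivalences. This is the content of the appendix, and it is what makes the proof of S1) in the $U(K)$ case work.
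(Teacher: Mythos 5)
Your overall route is the paper's: $\eta_{\ca}$ induces the identity on objects, so S2$'$) is automatic, and S1) is obtained from the non-additive filtration of the appendix together with the monoid axiom for the projective stable model structure. Two remarks. First, the ``main obstacle'' you single out is not actually an obstacle: proposition~\ref{clef} is stated for an arbitrary monoidal model category $\cv$ with cofibrant unit satisfying the monoid axiom, so it applies verbatim with $\cv=\mathsf{Sp}^{\Sigma}$ equipped with the projective \emph{stable} model structure, the monoid axiom in that case being \cite[5.4.1]{HSS}; nothing in the appendix has to be re-proved in a ``stable analog.'' For the same reason your case distinction is unnecessary: the maps $F_m\Lambda[k,n]_+\rightarrow F_m\Delta[n]_+$ are level trivial cofibrations, hence in particular stable trivial cofibrations, so all elements of $K\cup FI_{\Lambda}$ can be treated uniformly, which is exactly what the paper does.

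Second, there is a gap in your reduction to a single pushout. You justify it by asserting that stable equivalences are closed under transfinite composition ``being the weak equivalences of the projective stable model structure.'' Weak equivalences of a model category are not in general closed under transfinite composition, and for stable equivalences of symmetric spectra this closure cannot simply be quoted for arbitrary maps; this is precisely why the monoid axiom is formulated in terms of the class $(\{\mbox{stable trivial cofibrations}\}\wedge\mathsf{Sp}^{\Sigma})\mbox{-}\mbox{cof}_{reg}$, which is closed under cobase change and transfinite composition by definition. The correct bookkeeping is the one already implicit in the proof of proposition~\ref{clef}: each hom-level map in the small-object-argument tower for $Q(\ca)$ is exhibited there as a (transfinite composition of cobase changes of) map obtained by smashing a stable trivial cofibration with some object, hence lies in the above class, and hence so does the composite $\ca(x,y)\rightarrow Q(\ca)(x,y)$; the monoid axiom \cite[5.4.1]{HSS} then says it is a stable equivalence. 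With that substitution your argument becomes the paper's proof.
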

\begin{proof}
The elements of the set $K\cup FI_{\Lambda}$ are trivial cofibrations in the projective stable model structure. This model structure is monoidal and satisfies the monoid axiom~\cite[5.4.1]{HSS}. This implies, by proposition~\ref{clef}, that the spectral functor $\eta_{\ca}$ satisfies condition $S1)$. Since the spectral functor $\eta_{\ca}: \ca \longrightarrow Q(\ca)$ induce the identity on sets of objects, condition $S2')$ is automatically verified.
\end{proof}
\subsection*{Main theorem}
\begin{definition}
A spectral functor $F:\ca \rightarrow \cb$ is:
\begin{itemize}
\item[-] a {\em $Q$-weak equivalence} if $Q(F)$ is a levelwise quasi-equivalence (\ref{leveleq}).
\item[-] a {\em cofibration} if it is a cofibration in the model structure of theorem~\ref{levelwise}.
\item[-] a {\em $Q$-fibration} if it has the R.L.P. with respect to all cofibrations which are $Q$-weak equivalences.
\end{itemize}
\end{definition}
\begin{lemma}\label{coincide}
A spectral functor $F:\ca \rightarrow \cb$ is a $Q$-weak equivalence if and only if it is a stable quasi-equivalence.
\end{lemma}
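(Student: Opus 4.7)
The plan is to deduce the equivalence from naturality of $\eta$ together with the special structure of $Q(\ca)$ and $Q(\cb)$. Consider the commutative square
$$
\xymatrix{
\ca \ar[r]^-{\eta_{\ca}} \ar[d]_F & Q(\ca) \ar[d]^{Q(F)} \\
\cb \ar[r]_-{\eta_{\cb}} & Q(\cb)
}
$$
whose horizontal arrows are stable quasi-equivalences by Proposition~\ref{fibres}.

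First I would verify that the class $\cw_s$ satisfies the two out of three axiom. Condition S1 inherits this from the two out of three property for stable equivalences in $\mathsf{Sp}^{\Sigma}$, while S2 inherits it from the corresponding property for equivalences of categories (using the reformulation S1 $+$ S2', together with S2 applied to the hypothesis morphism, to handle mapping spectra with non-representable endpoints). Applied to the square above, this reduces the claim to showing that, for the specific spectral functor $Q(F)$, being a stable quasi-equivalence is equivalent to being a levelwise quasi-equivalence.

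Next I would exploit property $\Omega)$ from Remark~\ref{Omeg}: every mapping spectrum of $Q(\ca)$ and $Q(\cb)$ is an $\Omega$-spectrum. On the one hand a level equivalence is always a stable equivalence; on the other hand a stable equivalence between $\Omega$-spectra is a level equivalence, since for an $\Omega$-spectrum $X$ the naive group $\pi_n(X_k)$ coincides with the stable group $\pi_{n-k}(X)$ at every level. Hence conditions L1 and S1 agree on $Q(F)$. For the second condition, for an $\Omega$-spectrum $X$ one has $[\mathbb{S},X] \cong \pi_0(X_0)$, so $[Q(\ca)]$ is canonically identified with $\pi_0(Q(\ca)_0)$ and $[Q(F)]$ with $\pi_0(Q(F)_0)$. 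Once L1 is in hand, the essential surjectivity in S2') becomes literally the essential surjectivity needed to promote L1 to the full DK-equivalence of L2, so S1$+$S2 for $Q(F)$ is equivalent to L1$+$L2.

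The main obstacle is the clean identification $[\mathbb{S},X] \cong \pi_0(X_0)$ for $\Omega$-spectra: it relies on the fact that such spectra are semistable, so the naive homotopy groups compute the morphisms in the stable homotopy category $\Ho(\mathsf{Sp}^{\Sigma})$. A secondary subtlety is the precise verification of two out of three for $\cw_s$, which must be unwound into the S1 $+$ S2' form to handle composable pairs whose intermediate category is not a priori reached by the outer functors.
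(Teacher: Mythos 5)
Your proposal is correct and follows essentially the same route as the paper: the naturality square for $\eta$, two out of three for $\cw_s$, the $\Omega$-spectrum property to identify condition S1) with L1) for $Q(F)$, and the identification $[\mathbb{S},X]\cong\pi_0(X_0)$ for $\Omega$-spectra to match S2') with L2'). The only cosmetic difference is that the paper cites \cite[4.2.6]{HSS} for the fact that a stable equivalence between $\Omega$-spectra is a level equivalence, where you argue it directly via homotopy groups.
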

\begin{proof}
We have at our disposal a commutative square
$$
\xymatrix{
\ca \ar[d]_F \ar[r]^-{\eta_{\ca}} & Q(\ca) \ar[d]^{Q(F)} \\
\cb \ar[r]_-{\eta_{\cb}} & Q(\cb)\,,
}
$$
where by proposition~\ref{fibres}, the spectral functors $\eta_{\ca}$ and $\eta_{\cb}$ are stable quasi-equivalences. Since the class $\cw_s$ satisfies the two out of three axiom, the spectral functor $F$ is a stable quasi-equivalence if and only if $Q(F)$ is a stable quasi-equivalence. The spectral categories $Q(\ca)$ and $Q(\cb)$ satisfy condition $\Omega)$ and so by lemma~\cite[4.2.6]{HSS}, $Q(F)$ satisfies condition $S1)$ if and only if it satisfies condition $L1)$.

Notice that, since $Q(\ca)$ (and $Q(\cb)$) satisfy condition $\Omega)$, the set $[\mathbb{S}, Q(\ca)(x,y)]$ can be canonically identified with $\pi_0(Q(\ca)(x,y))_0$ and so the categories $[Q(\ca)]$ and $\pi_0(Q(\ca))$ are naturally identified. This allows us to conclude that $Q(F)$ satisfies condition $S2')$ if and only if it satisfies condition $L2')$ and so the lemma is proven.
\end{proof}
\begin{theorem}\label{stable}
The category $\mathsf{Sp}^{\Sigma}\text{-}\mbox{Cat}$ admits a right proper Quillen model structure whose weak equivalences are the stable quasi-equivalences (\ref{stableeq}) and the cofibrations those of theorem~\ref{levelwise}.
\end{theorem}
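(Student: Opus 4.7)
The plan is to apply the recognition theorem for cofibrantly generated model structures (\cite[2.1.19]{Hovey}) with the same generating cofibrations $I$ of Definition \ref{defc} and an enlarged set of generating trivial cofibrations $J_s := J \cup U(K)$, where $K$ is the set of Definition \ref{OmegaF}. The formal axioms come for free: $\mathsf{Sp}^{\Sigma}\text{-}\mbox{Cat}$ is complete and cocomplete, and $\cw_s$ satisfies two-out-of-three and is closed under retracts because stable equivalences of symmetric spectra and categorical equivalences are. The sequential smallness of the domains of $I$ and $J_s$ is inherited from the smallness of the corresponding domains in $\mathsf{Sp}^{\Sigma}$, by \cite{Kelly}. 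The functor $Q$ and Lemma \ref{coincide} provide the bridge between the levelwise and stable pictures.

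The first key step is to identify $I\text{-}\mbox{inj}$ with $J_s\text{-}\mbox{inj} \cap \cw_s$. Lemma \ref{Iinj} identifies $I\text{-}\mbox{inj}$ with the class $\mathbf{Surj}$; the inclusion $\mathbf{Surj} \subseteq J_s\text{-}\mbox{inj} \cap \cw_s$ is immediate since level equivalences are stable and every element of $\mathbf{Surj}$ has the RLP with respect to all cofibrations. For the converse, given $F \in J_s\text{-}\mbox{inj} \cap \cw_s$, consider the naturality square of $\eta$: the objects $Q(\ca)$ and $Q(\cb)$ satisfy property $\Omega)$ of Remark \ref{Omeg}, and Lemma \ref{coincide} says $Q(F)$ is a levelwise quasi-equivalence. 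The $\Omega$-spectrum comparison of \cite[4.2.6]{HSS} (which, under $\Omega)$, promotes stable equivalences to level equivalences) then recovers condition $L1)$ for $F$ itself, and the $J''$-injectivity of $F$ supplies condition $Sj2)$ via proposition~\cite[3.2]{Bergner}, exactly as in the proof of Lemma \ref{total}.

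The second key step is the inclusion $J_s\text{-}\mbox{cell} \subseteq \cw_s$. Since $J\text{-}\mbox{cell} \subseteq \cw_l \subseteq \cw_s$ by Lemma \ref{cell} and Proposition \ref{cell1}, it remains to verify that $U(K)\text{-}\mbox{cell} \subseteq \cw_s$. This follows from Proposition \ref{clef} combined with the fact that the elements of $K$ are trivial cofibrations in the monoidal projective stable model structure on $\mathsf{Sp}^{\Sigma}$, which satisfies the monoid axiom (\cite[5.4.1]{HSS}); the cell maps are identities on objects, so condition $S2')$ is automatic.

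Right properness then follows along the pattern of Proposition \ref{Rproper}: given a pullback square along a stable fibration $F$ with $R$ a stable quasi-equivalence, apply $Q$, reduce to levelwise right properness between $\Omega$-spectral categories (Proposition \ref{Rproper}), and use Lemma \ref{coincide} together with \cite[4.2.6]{HSS} to return to the stable setting. The main obstacle I expect is the second half of the first key step: recovering condition $L1)$ for $F$ from the corresponding condition for $Q(F)$, which requires careful use of the $\Omega$-spectrum characterization of stable equivalences in the several-objects context.
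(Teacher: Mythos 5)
Your strategy is not the paper's: the paper does not exhibit generating trivial cofibrations for the stable structure at all, but instead verifies conditions (A1)--(A3) of a modified Bousfield--Friedlander/Jardine localization theorem (Theorem~\ref{modif}) for the pair $(Q,\eta)$, obtaining right properness as part of that theorem's conclusion. More importantly, your plan has a fatal gap at exactly the point you flag as "the first key step": the inclusion $J_s\text{-}\mbox{inj}\cap\cw_s\subseteq I\text{-}\mbox{inj}$ is false for $J_s=J\cup U(K)$. The argument of Lemma~\ref{total} produces surjectivity on objects by showing that $F_0$ is a \emph{trivial} fibration in $s\mathbf{Set}\text{-}\mbox{Cat}$, and for that it uses that $F_0$ is a DK-equivalence, i.e.\ that $\pi_0(F_0)$ is essentially surjective. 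A stable quasi-equivalence only gives essential surjectivity of $[F]$, where morphisms are $[\mathbb{S},\cb(x,y)]$; this does not imply essential surjectivity of $\pi_0(F_0)$, whose morphisms are $\pi_0(\cb(x,y)_0)$, unless the hom-spectra are $\Omega$-spectra. Routing through $Q(F)$ does not repair this: $\eta_{\cb}$ is the identity on objects but changes $\pi_0((-)_0)$, so knowing $Q(F)$ satisfies L2${}'$ says nothing about $F_0$. Concretely, let $X\subset\mathbb{S}$ be the symmetric subspectrum with $X_0=\ast$ and $X_n=S^n$ for $n\geq 1$ (a $\pi_*$-isomorphism onto $\mathbb{S}$, closed under the multiplication of $\mathbb{S}$), and let $\cb$ be the spectral category with two objects $u,v$, diagonal homs $\mathbb{S}$, off-diagonal homs $X$, and composition induced from $\mathbb{S}$. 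Then $u\cong v$ in $[\cb]$ but not in $\pi_0(\cb_0)$ (the only $0$-simplex of $\cb(u,v)_0$ is the basepoint). The full inclusion $F\colon u^{\ast}\cb\hookrightarrow\cb$ is a stable quasi-equivalence, has the R.L.P.\ with respect to $J'\cup U(K)$ (it is the identity on the only hom of the source) and with respect to every $A_{\ch}$ (any simplicial functor $\ch\to\cb_0$ sending $x$ to $u$ must send the second object to $u$, since $\pi_0\ch$ is indiscrete and $u\not\cong v$ in $\pi_0(\cb_0)$, hence factors through $u^{\ast}\cb_0$), yet $F$ is not surjective on objects, so $F\notin\mathbf{Surj}=I\text{-}\mbox{inj}$. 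Thus the recognition-theorem hypotheses fail for your $J_s$: the set $J''$ only encodes lifting of homotopy equivalences visible in $\pi_0((-)_0)$, whereas the stable fibrations must lift equivalences visible in $[-]$, and no element of $J\cup U(K)$ does that.

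This is precisely the difficulty the paper circumvents by localizing: it takes the cofibrations of Theorem~\ref{levelwise}, defines $Q$-weak equivalences and $Q$-fibrations, identifies $Q$-weak equivalences with stable quasi-equivalences (Lemma~\ref{coincide}), and checks (A1)--(A3), where (A3) is verified not by applying $Q$ to the pullback (note $Q$ does not preserve pullbacks, so your right-properness sketch also does not go through as stated) but by observing, via Lemma~\ref{U}, that a $Q$-fibration is levelwise a stable fibration of symmetric spectra and then invoking right properness of the projective stable model structure on $\mathsf{Sp}^{\Sigma}$. If you want to salvage a recognition-theorem proof, you would need to add to $J_s$ a (set of) spectral functor(s) playing the role Bergner's $\{x\}\to\ch$ plays levelwise but detecting invertibility in $[\cb]$ -- a genuinely new construction not present in your proposal.
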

\begin{notation}
We denote by $\mathsf{Ho}(\mathsf{Sp}^{\Sigma}\text{-}\mbox{Cat})$ the homotopy category hence obtained.
\end{notation}
In order to prove theorem~\ref{stable}, we will use a slight variant of theorem~\cite[X-4.1]{Jardine}. Notice that in the proof of lemma~\cite[X-4.4]{Jardine} it is only used the right properness assumption and in the proof of lemma~\cite[X-4.6]{Jardine} it is only used the following condition (A3). This allows us to state the following result.
\begin{theorem}{\cite[X-4.1]{Jardine}}\label{modif}
Let $\cc$ be a right proper Quillen model structure, $Q:\cc \rightarrow \cc$ a functor and $\eta:\mbox{Id} \rightarrow Q$ a natural transformation such that the following three conditions hold:
\begin{itemize}
\item[(A1)] The functor $Q$ preserves weak equivalences.
\item[(A2)] The maps $\eta_{Q(A)}, Q(\eta_{A}):Q(A) \rightarrow QQ(A)$ are weak equivalences in $\cc$.
\item[(A3)] Given a diagram
$$
\xymatrix{
& B \ar[d]^p \\
A \ar[r]_{\eta_A} & Q(A)
}
$$
with $p$ a $Q$-fibration, the induced map $\eta_{{\ca}_{\ast}}:A \underset{Q(A)}{\times} B\rightarrow B$ is a $Q$-weak equivalence.
\end{itemize}
Then there is a right proper Quillen model structure on $\cc$ for which the weak equivalences are the $Q$-weak equivalences, the cofibrations those of $\cc$ and the fibrations the $Q$-fibrations.
\end{theorem}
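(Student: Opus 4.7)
The plan is to invoke the variant of Jardine's localization theorem (Theorem \ref{modif}) applied to $\cc = \mathsf{Sp}^{\Sigma}\text{-}\mbox{Cat}$ equipped with the levelwise model structure of Theorem \ref{levelwise}, together with the functor $Q$ of Definition \ref{OmegaF} and the natural transformation $\eta:\mbox{Id}\to Q$. The starting model structure is right proper by Proposition \ref{Rproper}. Lemma \ref{coincide} identifies the $Q$-weak equivalences with the stable quasi-equivalences, and the cofibrations produced by Theorem \ref{modif} are by construction those of Theorem \ref{levelwise}. So everything reduces to verifying the axioms (A1), (A2), (A3).

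Conditions (A1) and (A2) fall out of material already in hand. For (A1), if $F:\ca\to\cb$ is a levelwise quasi-equivalence then $F$ is automatically a stable quasi-equivalence (L1 gives S1 since level equivalences are stable equivalences, and L2 gives S2 via the natural identity-on-objects functor $\pi_0(\ca_0)\to[\ca]$); Lemma \ref{coincide} then promotes this to $Q(F)$ being a levelwise quasi-equivalence. For (A2), Proposition \ref{fibres} says $\eta_{Q(\ca)}$ is a stable quasi-equivalence; since $Q(\ca)$ and $QQ(\ca)$ are both $\Omega$-spectral categories (Remark \ref{Omeg}), the reasoning at the end of the proof of Lemma \ref{coincide} (via \cite[4.2.6]{HSS} for S1 vs.\ L1, and the identification $[\ca]\cong\pi_0(\ca_0)$ in the $\Omega$-case for S2 vs.\ L2) upgrades this to a levelwise quasi-equivalence. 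Applying (A1) to the stable quasi-equivalence $\eta_\ca$ handles $Q(\eta_\ca)$ simultaneously.

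The main obstacle is (A3). Given a $Q$-fibration $p:\cb\to Q(\ca)$, I must show the projection $\ca\underset{Q(\ca)}{\times}\cb\to\cb$ of the pullback of $\eta_\ca$ along $p$ is a stable quasi-equivalence. Since pullbacks in $\mathsf{Sp}^{\Sigma}\text{-}\mbox{Cat}$ are computed on objects and on hom-spectra, and every $Q$-fibration has the RLP against the set $J$ of trivial generating cofibrations of Theorem \ref{levelwise} (these are cofibrations and levelwise, hence stable, quasi-equivalences), $p$ is in particular a levelwise fibration: a level fibration on each hom spectrum and a Bergner fibration on $(-)_0$. The crux is to upgrade $p$ to a stable fibration on hom spectra, which uses that $Q(\ca)$ has $\Omega$-spectrum homs; this is precisely the content of \cite[4.2.6]{HSS}. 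Right properness of the projective stable model structure on $\mathsf{Sp}^{\Sigma}$ \cite[III-2.2]{Schwede}, combined with $\eta_\ca$ being a stable equivalence on hom spectra, then gives S1 for the pullback projection. For S2', I would pass to underlying simplicial categories via $(-)_0$, exploit right properness of the Bergner model structure on $s\mathbf{Set}\text{-}\mbox{Cat}$ \cite[3.5]{Bergner}, and use the identification $[\ca]\cong\pi_0(Q(\ca)_0)$ for $\Omega$-spectral categories to transport the essential surjectivity of $[\eta_\ca]$ to the pullback projection.

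With (A1), (A2), (A3) in place, Theorem \ref{modif} immediately delivers the stated right proper Quillen model structure. The hard part by far is (A3), and its subtlety is concentrated in the step upgrading the levelwise fibration property of a $Q$-fibration into a stable fibration property on hom spectra, using heavily that the target $Q(\ca)$ is $\Omega$-spectral.
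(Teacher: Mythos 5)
There is a fundamental mismatch here: you have not proved the statement in question, you have \emph{used} it. The statement is Theorem \ref{modif} itself, an abstract localization theorem: for \emph{any} right proper Quillen model structure $\cc$ equipped with a functor $Q:\cc\rightarrow\cc$ and a natural transformation $\eta:\mbox{Id}\rightarrow Q$ satisfying (A1), (A2), (A3), the $Q$-weak equivalences, the old cofibrations and the $Q$-fibrations assemble into a right proper model structure. A proof of this must establish, in that generality, the factorization and lifting axioms for the new structure, the characterization of maps which are simultaneously $Q$-fibrations and $Q$-weak equivalences, and right properness of the localized structure. Your proposal instead takes $\cc = \mathsf{Sp}^{\Sigma}\text{-}\mbox{Cat}$ with the levelwise structure, takes $Q$ to be the functor of Definition \ref{OmegaF}, verifies (A1)--(A3) in that specific case, and then \emph{invokes Theorem \ref{modif}} to conclude. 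As a proof of Theorem \ref{modif} this is circular; what you have written is (in outline, and correctly) the paper's proof of Theorem \ref{stable}, which is a different statement.

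For comparison, the paper's justification of Theorem \ref{modif} is not a verification of hypotheses in an example but an inspection of the proof of the Bousfield--Friedlander-type theorem \cite[X-4.1]{Jardine}: that theorem is stated there under a properness hypothesis, and the paper observes that in the proof of lemma \cite[X-4.4]{Jardine} only \emph{right} properness is used, and in the proof of lemma \cite[X-4.6]{Jardine} only condition (A3) is used, so the argument goes through verbatim under the weaker hypotheses and yields the stated right proper localized structure. If you want a self-contained proof, that is the content you would need to reproduce: the lemma that a map is a $Q$-fibration and a $Q$-weak equivalence if and only if it is a trivial fibration in $\cc$ (this is where right properness and (A1), (A2) enter), and the factorization of an arbitrary map into a cofibration which is a $Q$-weak equivalence followed by a $Q$-fibration (this is where (A3) enters). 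None of this appears in your proposal.
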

\subsection*{Proof of theorem~\ref{stable}}
The proof will consist on verifying the conditions of theorem~\ref{modif}. We consider for $\cc$ the Quillen model structure of theorem~\ref{levelwise} and for $Q$ and $\eta$, the functor and natural transformation defined in \ref{OmegaF}. The Quillen model structure of theorem~\ref{levelwise} is right proper (\ref{Rproper}) and by lemma~\ref{coincide} the $Q$-weak equivalences are precisely the stable quasi-equivalences. We now verify condition (A1), (A2) and (A3):
\begin{itemize}
\item[(A1)] Let $F:\ca \rightarrow \cb$ be a levelwise quasi-equivalence. We have the following commutative square
$$
\xymatrix{
\ca \ar[d]_F \ar[r]^-{\eta_{\ca}} & Q(\ca) \ar[d]^-{Q(F)}\\
\cb \ar[r]_-{\eta_{\cb}} & Q(\cb)\,,
}
$$
with $\eta_{\ca}$ and $\eta_{\cb}$ stable quasi-equivalences. Notice that since $F$ satisfies condition L1), the spectral functor $Q(F)$ satisfies condition S1). The spectral categories $Q(\ca)$ and $Q(\cb)$ satisfy condition $\Omega)$ and so by lemma~\cite
[4.2.6]{HSS} the spectral functor $Q(F)$ satisfies condition L1).

Observe that since the spectral functors $\eta_{\ca}$ and $\eta_{\cb}$ induce the identity on sets of objects and $F$ satisfies condition L2'), the spectral functor $Q(F)$ satisfies also condition L2').
\item[(A2)] We now show that for every spectral category $\ca$, the spectral functors
$$\eta_{Q(\ca)}, Q(\eta_{\ca}):Q(\ca) \rightarrow QQ(\ca)$$
are levelwise quasi-equivalences.
Since the spectral functors $\eta_{Q(\ca)}$ and $Q(\eta_{\ca})$ are stable quasi-equivalences between spectral categories which satisfy condition $\Omega)$, they satisfy by lemma~\cite[4.2.6]{HSS} condition L1).
The functor $Q$ induce the identity on sets of objects and so the spectral functors $\eta_{Q(\ca)}$ and $Q(\eta_{\ca})$ clearly satisfy condition L2').

\item[(A3)] We start by observing that if $P: \cc \rightarrow \cd$ is a $Q$-fibration, then for all $x,y \in \cc$ the morphism of symmetric spectra
$$ P(x,y): \cc(x,y) \longrightarrow \cd(Px,Py)$$
is a fibration in projective stable model structure \cite[III-2.2]{Schwede}. In fact, by proposition~\ref{U}, the functor
$$U: \mathsf{Sp}^{\Sigma} \longrightarrow \mathsf{Sp}^{\Sigma}\text{-}\mbox{Cat}$$
sends projective cofibrations to cofibrations. Since it sends also stable equivalences to stable quasi-equivalences the claim follows.

Now consider the diagram
$$
\xymatrix{
\ca \underset{Q(\ca)}{\times}\cb \ar[d] \ar[r] \ar@{}[dr]|{\ulcorner} & \cb \ar[d]^P\\
\ca \ar[r]_{\eta_{\ca}} & Q(\ca),
}
$$
with $P$ a $Q$-fibration. The projective stable model structure on $\mathsf{Sp}^{\Sigma}$ is right proper and so, by construction of fiber products in $\mathsf{Sp}^{\Sigma}\text{-}\mbox{Cat}$, we conclude that the induced spectral functor
$$ \eta_{\ca_*}: \ca \underset{Q(\ca)}{\times}\cb \longrightarrow \cb$$
satisfies condition S1). Since $\eta_{\ca}$ induces the identity on sets of objects so it thus $\eta_{\ca_*}$, and so condition S2') is verified. 
\end{itemize}
\begin{proposition}
A spectral category is fibrant with respect to theorem~\ref{stable} if and only if for all objects $x,y \in \ca$, the symmetric spectrum $\ca(x,y)$ is an $\Omega$-spectrum.
\end{proposition}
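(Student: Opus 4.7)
The strategy is to prove the two implications separately, relying on tools assembled in the proof of theorem~\ref{stable}.

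For the forward direction, assume $\ca$ is fibrant in the stable Quillen model structure, i.e. the spectral functor $\ca \to \bullet$ is a $Q$-fibration. The observation opening the verification of condition $(A3)$ in the proof of theorem~\ref{stable} shows that every $Q$-fibration $P: \cc \to \cd$ induces component fibrations $P(x,y)$ in the projective stable model structure on $\mathsf{Sp}^{\Sigma}$; the inputs are lemma~\ref{U} (which says $U$ sends projective cofibrations to cofibrations) together with the obvious fact that $U$ sends stable equivalences to stable quasi-equivalences, so that by adjunction the lifting property transfers to the symmetric-spectrum level. Specializing to $\ca \to \bullet$, every $\ca(x,y) \to \ast$ is then fibrant in the projective stable model structure, which by~\cite[III-2.2]{Schwede} means exactly that $\ca(x,y)$ is an $\Omega$-spectrum.

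For the reverse direction, suppose each $\ca(x,y)$ is an $\Omega$-spectrum. First I would observe that $\eta_{\ca}: \ca \to Q(\ca)$ is a \emph{levelwise} quasi-equivalence. By proposition~\ref{fibres}, $\eta_{\ca}$ is a stable quasi-equivalence; since both $\ca$ (by hypothesis) and $Q(\ca)$ (by remark~\ref{Omeg}) satisfy property $\Omega)$, each component $\ca(x,y) \to Q(\ca)(x,y)$ is a stable equivalence between $\Omega$-spectra, hence a level equivalence by~\cite[4.2.6]{HSS}, giving condition $L1)$; condition $L2')$ is automatic since $\eta_{\ca}$ induces the identity on objects. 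Moreover, because every $\Omega$-spectrum is in particular levelwise Kan, corollary~\ref{fiblev} yields that $\ca$ is fibrant in the Quillen model structure of theorem~\ref{levelwise}.

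To conclude, I would invoke the characterization of fibrant objects implicit in Jardine's theorem~\cite[X-4.1]{Jardine} underlying theorem~\ref{modif}: an object $X$ is $Q$-fibrant precisely when $X$ is fibrant in the original model structure and $\eta_X$ is a weak equivalence there. Both conditions have just been verified for $\ca$, so $\ca$ is fibrant in the stable structure. The main obstacle I anticipate is extracting this clean characterization of $Q$-fibrant objects from theorem~\ref{modif} as stated; if one prefers to avoid citing~\cite[X-4.1]{Jardine} beyond what is explicitly invoked in the excerpt, an alternative route is to use remark~\ref{Omega} and adjunction to see that the hypothesis on $\ca$ is equivalent to $\ca \to \bullet$ having the R.L.P. against $U(K) \cup J'$; since $\eta_{\ca}$ is by construction a relative $(U(K)\cup J')$-cell complex, lifting the identity produces a retraction $r: Q(\ca) \to \ca$, so that $\ca$ is a retract of $Q(\ca)$ in $\mathsf{Sp}^{\Sigma}\text{-}\mbox{Cat}$, and one then bootstraps from the fibrancy of $Q(\ca)$.
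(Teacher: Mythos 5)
Your main argument is correct, and it splits as follows when compared with the paper. For the converse direction you do essentially what the paper does: the paper cites corollary~X-4.12 of \cite{Jardine} (not just theorem~X-4.1), which says precisely that an object is fibrant in the localized structure if and only if it is fibrant in the original structure and $\eta$ is a weak equivalence there; so the ``obstacle'' you anticipate is resolved simply by citing that corollary, and your verifications --- $\ca$ is fibrant for theorem~\ref{levelwise} by corollary~\ref{fiblev} since $\Omega$-spectra are levelwise Kan, and $\eta_{\ca}$ is a levelwise quasi-equivalence by proposition~\ref{fibres}, remark~\ref{Omeg} and \cite[4.2.6]{HSS}, with $L2')$ automatic --- are exactly the paper's. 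For the forward direction you take a genuinely different and somewhat more direct route: you specialize the observation made under condition (A3) in the proof of theorem~\ref{stable} (components of $Q$-fibrations are stable fibrations, via lemma~\ref{U} and adjunction) to the map $\ca \rightarrow \bullet$, so each $\ca(x,y)$ is stably fibrant, hence an $\Omega$-spectrum. The paper instead reuses X-4.12 in this direction as well: fibrancy gives levelwise Kan hom-spectra and a level equivalence $\ca(x,y) \rightarrow Q(\ca)(x,y)$ onto an $\Omega$-spectrum, and then a two-out-of-three argument on the maps $\widetilde{\delta}_n$ shows $\ca(x,y)$ is an $\Omega$-spectrum. Both are valid; yours trades the diagram chase for the lifting-property characterization of $Q$-fibrations already established in the paper.

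The ``alternative route'' you offer to avoid citing Jardine is, however, circular as sketched. The retraction is fine: the hypothesis is indeed equivalent, via remark~\ref{Omega} and the adjunction description of maps out of $U(-)$, to $\ca \rightarrow \bullet$ having the R.L.P.\ with respect to $U(K)\cup J'$, and since $\eta_{\ca}$ is a relative $(U(K)\cup J')$-cell complex one obtains $r:Q(\ca)\rightarrow \ca$ with $r\circ\eta_{\ca}=\mbox{id}$. But ``bootstrapping from the fibrancy of $Q(\ca)$'' presupposes that $Q(\ca)$ is fibrant in the model structure of theorem~\ref{stable}, which at this point is not known: $Q(\ca)$ is only known to satisfy condition $\Omega)$, so its stable fibrancy is exactly an instance of the proposition being proved (in the paper it is recorded only in the remark \emph{after} the proposition, as a consequence of it together with X-4.12). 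So either retain the citation of \cite[X-4.12]{Jardine}, or give an independent argument that $Q(\ca)$ is a $Q$-fibrant object; as written, the fallback does not stand on its own.
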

\begin{proof}
By corollary~\cite[X-4.12]{Jardine} $\ca$ is fibrant with respect to theorem~\ref{stable} if and only if it is fibrant (\ref{fiblev}), with respect to the model structure of theorem~\ref{levelwise}, and the spectral functor $\eta_{\ca}:\ca \rightarrow Q(\ca)$ is a levelwise quasi-equivalence. Observe that $\eta_{\ca}$ is a levelwise quasi-equivalence if and only if for all objects $x,y \in \ca$ the morphism of symmetric spectra
$$ \eta_{\ca}(x,y):\ca(x,y) \longrightarrow Q(\ca)(x,y)$$
is a level equivalence. Since $Q(\ca)(x,y)$ is an $\Omega$-spectrum we have the following commutative diagrams (for all $n\geq 0$)
$$
\xymatrix{
\ca(x,y)_n \ar[d] \ar[r]^-{\widetilde{\delta_n}} & \Omega \ca(x,y)_{n+1} \ar[d] \\
Q(\ca)(x,y)_n \ar[r]_-{\widetilde{\delta_n}} & \Omega Q(\ca)(x,y)_{n+1}\,,
}
$$
where the bottom and vertical arrows are weak equivalences of pointed simplicial sets. This implies that
$$ \widetilde{\delta}_n: \ca(x,y)_n \stackrel{\sim}{\longrightarrow} \Omega\ca(x,y)_{n+1}, \,\,\, n \geq 0$$
is a weak equivalence of pointed simplicial sets and so we conclude that for all objects $x,y \in \ca$, $Q(x,y)$ is an $\Omega$-spectrum.
\end{proof}
\begin{remark}
Notice that proposition~\ref{fibres} and remark~\ref{Omega} imply that $\eta_{\ca}:\ca \rightarrow Q(\ca)$ is a functorial fibrant replacement of $\ca$ in the model structure of theorem~\ref{stable}.
\end{remark}

\appendix
\section{Non-additive filtration argument}\label{app:SS}
In this appendix, we adapt Schwede-Shipley's non-additive filtration argument~\cite{SS} to a `several objects' context.
Let $\cv$ be a monoidal model category, with cofibrant unit $\mathbb{I}$, initial object $0$, and which satisfies the monoid axiom \cite[3.3]{SS}. 
\begin{definition}\label{funcU}
Let
$$U: \cv \longrightarrow \cv\text{-}\mbox{Cat}\,,$$
be the functor which sends an object $X\in \cv$ to the $\cv$-category $U(X)$, with two objects $1$ and $2$ and such that $U(X)(1,1)=U(X)(2,2)=\mathbb{I}, \,U(X)(1,2)=X$ and $U(X)(2,1)=0$. Composition is naturally defined (the initial object acts as a zero with respect to $\wedge$ since the bi-functor $-\wedge-$ preserves colimits in each of its variables).
\end{definition}
In what follows, by {\em smash product} we mean the symmetric product $-\wedge-$ of $\cv$.
\begin{proposition}\label{clef}
Let $\ca$ be a $\cv$-category, $j:K\rightarrow L$ a trivial cofibration in $\cv$ and $F: U(K) \rightarrow \ca$ a morphism in $\cv\text{-}\mbox{Cat}$. Then in the pushout
$$
\xymatrix{
U(K) \ar[r]^-F \ar[d]_{U(j)} \ar@{}[dr]|{\lrcorner} & \ca \ar[d]^R \\
U(L) \ar[r] & \cb\,,
}
$$
the morphisms
$$ R(x,y):\ca(x,y) \longrightarrow \cb(x,y), \,\,\,\, x,y\in \ca$$
are weak equivalences in $\cv$.
\end{proposition}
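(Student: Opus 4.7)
My strategy is to adapt Schwede-Shipley's non-additive filtration argument to this several-object setting. Write $a := F(1)$ and $b := F(2)$ for the images in $\ca$, and abbreviate $A_{xy} := \ca(x,y)$. Since $U(j)$ is the identity on objects, the pushout $\cb$ has the same objects as $\ca$ and $R$ is the identity on objects, so the entire content of the claim lies in the hom-objects.

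The plan is to exhibit each $\cb(x,y)$ as the colimit of a sequential filtration
\[ A_{xy} = P_0(x,y) \longrightarrow P_1(x,y) \longrightarrow P_2(x,y) \longrightarrow \cdots, \]
where $P_n(x,y)$ is built by allowing "words" from $x$ to $y$ that use the newly adjoined generator $a \to b$ at most $n$ times. Concretely, for $X \in \cv$ set
\[ W_n^X(x,y) := A_{by} \wedge X \wedge A_{ba} \wedge X \wedge \cdots \wedge X \wedge A_{xa}, \]
with $n$ interleaved copies of $X$. The pair consisting of $j: K \to L$ and the identification $F: K \to A_{ab}$ organises the various "replace a $K$-factor by its $F$-image in $A_{ab}$" maps into a subobject $Q_n(x,y)$ of $W_n^L(x,y)$, and the structural fact to verify is that $P_n(x,y)$ is obtained from $P_{n-1}(x,y)$ by pushout along the inclusion $Q_n(x,y) \to W_n^L(x,y)$. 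This is the multi-object analogue of the filtration constructed in Section~6 of~\cite{SS}.

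Once the filtration is in hand, the monoid axiom does the work. By an iterated pushout-product calculation, the attaching map $Q_n(x,y) \to W_n^L(x,y)$ lies in the saturated class generated from $j: K \to L$ by smashing with objects of $\cv$, cobase change, and transfinite composition. Since $j$ is a trivial cofibration and $\cv$ satisfies the monoid axiom, each step $P_{n-1}(x,y) \to P_n(x,y)$ is a weak equivalence, and hence so is the transfinite composition $R(x,y): A_{xy} \to \cb(x,y)$.

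The principal obstacle, and the reason the proposition is not immediate from the one-object case, lies in verifying the filtration itself: one must check that the pushout in $\cv\text{-}\mbox{Cat}$ is actually computed by this iterated construction, while carefully tracking both the alternating source/target pattern $\cdots \to a \to b \to a \to b \to \cdots$ imposed by the shape of $U(L)$ and the identification forced by $F$ on the $K$-factors. Once this bookkeeping is done, the conclusion is a formal consequence of the monoid axiom exactly as in the single-object Schwede-Shipley argument.
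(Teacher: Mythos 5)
Your proposal follows essentially the same route as the paper: it is precisely Schwede--Shipley's non-additive filtration of $\cb(x,y)$ by the number of occurrences of the adjoined generator, with each layer attached along a map isomorphic to an iterated pushout product of $j$ smashed with hom-objects of $\ca$ (your alternating pattern $A_{by}\wedge X\wedge A_{ba}\wedge\cdots\wedge X\wedge A_{xa}$ is the intended one), the verification that this filtration computes the pushout in $\cv\text{-}\mbox{Cat}$ being deferred to the one-object argument exactly as the paper does. One small point of hygiene: the weak equivalence of $R(x,y)$ should be concluded from the fact that the entire transfinite composition lies in the monoid-axiom class (which your placement of the attaching maps in that saturated class already gives), not from "each step is a weak equivalence, hence so is the composite," since weak equivalences need not be closed under transfinite composition.
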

\begin{proof}
Notice that $\ca$ and $\cb$ have the same set of objects and the morphism $R$ induces the identity on sets of objects. The description of the morphisms
$$ R(x,y): \ca(x,y) \rightarrow \cb(x,y), \,\,\,\, x,y \in \ca$$ 
in $\cv$ is analogous to the one given by Schwede-Shipley in the proof of lemma \cite[6.2]{SS}. The `ideia' is to think of $\cb(x,y)$ as consisting of formal smash products of elements in $L$ with elements in $\ca$, with the relations coming from $K$ and the composition in $\ca$. Consider the same (conceptual) proof as the one of lemma~\cite[6.2]{SS}: $\cb(x,y)$ will appear as the colimit in $\cv$ of a sequence
$$ \ca(x,y)=P_0 \rightarrow P_1 \rightarrow \cdots \rightarrow P_n \rightarrow \cdots\,,$$
that we now describe. We start by defining a $n$-dimensional cube in $\cv$, i.e. a functor
$$ W:\cp(\{1,2,\ldots,n\}) \longrightarrow \cv$$
from the poset category of subsets of $\{1,2,\ldots,n\}$ to $\cv$. If $S \subseteq \{1,2,\ldots,n\}$ is a subset, the vertex of the cube at $S$ is
$$ W(S):=\ca(x,F(0))\wedge C_1 \wedge \ca(F(1),F(1)) \wedge C_2 \wedge \cdots \wedge C_n \wedge \ca(F(1),y)\,,$$
with
\[
C_i  =  \left \{\begin{array}{ccc}
K & \mbox{if} & i\notin S \\
L & \mbox{if} & i \in S\,.
\end{array}
\right.
\]
The maps in the cube $W$ are induced from the map $j:K\rightarrow L$ and the identity on the remaining factors. So at each vertex, a total of $n+1$ factors of objects in $\cv$, alternate with $n$ smash factors of either $K$ or $L$. The initial vertex, corresponding to the empty subset has all its $C_i$'s equal to $K$, and the terminal vertex corresponding to the whole set has all it's $C_i$'s equal to $L$.

Denote by $Q_n$, the colimit of the punctured cube, i.e. the cube with the terminal vertex removed. Define $P_n$ via the pushout in $\cv$
$$
\xymatrix{
Q_n \ar[d] \ar[r] \ar@{}[dr]|{\lrcorner} & \ca(x,F(0))\wedge L \wedge (\ca(F(1),F(1))\wedge L)^{\wedge(n-1)}\wedge \ca(F(1),y) \ar[d] \\
P_{n-1} \ar[r] & P_n\,,
}
$$
where the left vertical map is defined as follows: for each proper subset $S$ of $\{1,2,\ldots,n\}$, we consider the composed map
$$ W(S) \longrightarrow \underbrace{\ca(x,F(0))\wedge L \wedge \ca(F(1),F(1))\wedge \ldots \wedge L \wedge \ca(F(1),y)}_{|S| \,\, \mbox{factors} \,\,L} $$
obtained by first mapping each factor of $W(S)$ equal to $K$ to $\ca(F(1),F(1))$, and then composing in $\ca$ the adjacent factors. Finally, since $S$ is a proper subset, the right hand side belongs to $P_{|S|}$ and so to $P_{n+1}$. Now the same (conceptual) arguments as those of lemma~\cite[6.2]{SS} shows us that the above construction furnishes us a description of the $\cv$-category $\cb$.

We now analyse the constructed filtration. The cube $W$ used in the inductive definiton of $P_n$ has $n+1$ factors of objects in $\cv$, which map by the identity everywhere. Using the symmetry isomorphism of $-\wedge-$, we can shuffle them all to one side and observe that the map
$$ Q_n \longrightarrow \ca(x,F(0))\wedge L \wedge (\ca(F(1),F(1))\wedge L)^{\wedge(n-1)}\wedge \ca(F(1),y)$$
is isomorphic to 
$$ \overline{Q_n} \wedge \cz_n \longrightarrow L^{\wedge n}\wedge \cz_n \,,$$
where 
$$\cz_n:=\ca(x,F(0))\wedge L \wedge (\ca(F(1),F(1))\wedge L)^{\wedge(n-1)}\wedge \ca(F(1),y)$$
and $\overline{Q_n}$ is the colimit of a punctured cube analogous to $W$, but with all the smash factors diferent from $K$ or $L$ deleted. By iterated application of the pushout product axiom, the map $\overline{Q_n} \rightarrow L^{\wedge n}$ is a trivial cofibration and so by the monoid axiom, the map $P_{n+1} \rightarrow P_n$ is a weak equivalence in $\cv$. Since the map
$$ R(x,y): \ca(x,y)=P_0 \longrightarrow \cb(x,y)$$
is the kind of map considered in the monoid axiom, it is also a weak equivalence and so the proposition is proven. 
\end{proof}
\begin{proposition}\label{clef1}
Let $\ca$ be a $\cv$-category such that $\ca(x,y)$ is cofibrant in $\cv$ for all $x,y \in \ca$ and $i:N \rightarrow M$ a cofibration in $\cv$. Then in the pushout
$$
\xymatrix{
U(N) \ar[r]^F \ar[d]_{U(i)} \ar@{}[dr]|{\lrcorner} & \ca \ar[d]^R \\
U(M) \ar[r] & \cb\,,
}
$$
the morphisms
$$ R(x,y):\ca(x,y) \longrightarrow \cb(x,y), \,\,\,\, x,y\in \ca$$
are cofibrations in $\cv$.
\end{proposition}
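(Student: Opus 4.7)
The plan is to recycle the filtration argument from the proof of Proposition~\ref{clef}, replacing the trivial cofibration $j:K\to L$ by the cofibration $i:N\to M$. That construction expresses $\cb(x,y)$ as a sequential colimit
$$ \ca(x,y)=P_0 \longrightarrow P_1 \longrightarrow \cdots \longrightarrow P_n \longrightarrow \cdots $$
in $\cv$, where each $P_{n-1}\to P_n$ is the pushout of a map isomorphic to $\overline{Q_n}\wedge \cz_n \to M^{\wedge n}\wedge \cz_n$, with $\cz_n$ a smash of factors of the form $\ca(\cdot,\cdot)$ and $\overline{Q_n}\to M^{\wedge n}$ the iterated pushout-product of $i$ with itself. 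Since cofibrations in $\cv$ are closed under cobase change and transfinite composition, it will suffice to check that each of the maps $\overline{Q_n}\wedge \cz_n \to M^{\wedge n}\wedge \cz_n$ is a cofibration in $\cv$.

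Two ingredients will combine to give this. First, iterating the pushout-product axiom on the cofibration $i$ shows that $\overline{Q_n}\to M^{\wedge n}$ is itself a cofibration. Second, since the unit $\mathbb{I}$ of $\cv$ is cofibrant and the smash product of two cofibrant objects is cofibrant (apply pushout-product to $0\to X$ and $0\to Y$), the hypothesis that every $\ca(x,y)$ is cofibrant implies that $\cz_n$, being an iterated smash of such hom-objects, is cofibrant. Applying the pushout-product axiom once more to the cofibration $\overline{Q_n}\to M^{\wedge n}$ and the cofibration $0\to \cz_n$ then yields that $\overline{Q_n}\wedge \cz_n \to M^{\wedge n}\wedge \cz_n$ is a cofibration, as required. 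Composing these pushouts transfinitely gives that $R(x,y)$ is a cofibration.

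I do not anticipate any real obstacle: the geometric bookkeeping, namely the cube $W$, the auxiliary objects $Q_n$, $\overline{Q_n}$, $\cz_n$, and the inductive pushout presentation of $P_n$, is already set up in the proof of Proposition~\ref{clef} and transfers verbatim. The only substantive change is that where Proposition~\ref{clef} invokes the monoid axiom to upgrade ``trivial cofibration smashed with an arbitrary object'' to a weak equivalence, here one invokes the bare pushout-product axiom to upgrade ``cofibration smashed with a cofibrant object'' to a cofibration. The hypothesis that each $\ca(x,y)$ is cofibrant is precisely what makes $\cz_n$ cofibrant and thereby licenses this second step; without it, the argument would break at exactly the point of smashing with $\cz_n$.
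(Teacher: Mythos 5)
Your proposal is correct and follows essentially the same route as the paper: reuse the filtration of Proposition~\ref{clef} with $i:N\to M$ in place of the trivial cofibration, note that $\overline{Q_n}\to M^{\wedge n}$ is a cofibration by iterated pushout-product, use the cofibrancy of the hom-objects $\ca(x,y)$ (hence of $\cz_n$) to conclude that $\overline{Q_n}\wedge\cz_n\to M^{\wedge n}\wedge\cz_n$ is a cofibration, and finish by closure of cofibrations under cobase change and transfinite composition. This is exactly the paper's argument, with the monoid axiom of Proposition~\ref{clef} replaced by the pushout-product axiom, as you observe.
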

\begin{proof}
The description of the morphisms
$$ R(x,y):\ca(x,y) \longrightarrow \cb(x,y), \,\,\,\, x,y\in \ca$$
is analogous to the one of proposition~\ref{clef}. Since for all $x,y \in \ca$, $\ca(x,y)$ is cofibrant in $\cv$, the pushout product axiom implies that in this situation the map
$$ \overline{Q_n}\wedge \cz_n \longrightarrow L^{\wedge n} \wedge \cz_n$$
is a cofibration. Since cofibrations are stable under co-base change and transfinite composition, we conclude that the morphisms $R(x,y),\,x,y \in \ca$ are cofibrations.
\end{proof}

\end{document}